\RequirePackage{fix-cm}
\documentclass[smallextended, envcountsect]{svjour3}
\smartqed  
\usepackage{graphicx}
\usepackage{epstopdf}
\usepackage{amsmath, amssymb, amsfonts}
\usepackage{algorithmic}
\usepackage{cases}
\usepackage{bm}
\usepackage{tikz}
\usetikzlibrary{calc}
\usetikzlibrary{arrows}
\usepackage{multirow}
\usepackage{tabularx}
\usepackage{booktabs}
\usepackage{cite}
\usepackage{verbatim}
\usepackage{color}
\usepackage{subfigure}
\usepackage{epsfig}
\usepackage{mathrsfs}
\usepackage{stmaryrd}
\usepackage{amsopn}

\numberwithin{equation}{section}
\usepackage[colorlinks=true, citecolor=blue, linkcolor=blue, urlcolor=blue]{hyperref}
\numberwithin{equation}{section}
\allowdisplaybreaks[4]

\begin{document}

\title{High-order parametric local discontinuous Galerkin methods for anisotropic curve-shortening flows}

\subtitle{}   

\titlerunning{High-order LDG methods for anisotropic curve-shortening flows}
\author{Xiuhui Guo \and Wei Jiang \and Chunmei Su}
\authorrunning{X. Guo, W. Jiang and C. Su}

\institute{
  Xiuhui Guo \at
    QiuZhen College, Tsinghua University, Beijing 100084, China \\
    \email{gxh23@mails.tsinghua.edu.cn}
  \and
  Wei Jiang \at
    School of Mathematics and Statistics, Hubei Key Laboratory of Computational Science, Wuhan University, Wuhan 430072, China \\
    \email{jiangwei1007@whu.edu.cn}
  \and
  Chunmei Su \at
    Yau Mathematical Sciences Center, Tsinghua University, Beijing 100084, China; \\
    and Beijing Institute of Mathematical Sciences and Applications, Beijing 101408, China \\
    \email{sucm@tsinghua.edu.cn}
}

\date{Received: date / Accepted: date}

\maketitle

\begin{abstract}
We propose a family of high-order local discontinuous Galerkin (LDG) methods, built on a parametric representation and coupled with a semi-implicit backward Euler time discretization, for isotropic and anisotropic curve-shortening flows. The spatial LDG formulation introduces auxiliary variables and carefully designed numerical fluxes which inherit the underlying variational structure. We prove the unconditional energy dissipation for the semi-discrete scheme, and establish the well-posedness for the fully discrete scheme under mild assumptions. For $P^k$ approximations, the LDG method achieves high-order spatial convergence; extensive numerical experiments confirm optimal $(k+1)$-order accuracy when the surface energy is isotropic or weakly anisotropic. Compared to classical parametric finite element methods (PFEM), the proposed LDG schemes do not need to rely on good mesh distributions or auxiliary symmetrized surface energy matrices for strongly anisotropic surface energy cases, and remain numerically stable on severely degraded meshes that typically cause PFEMs failure. This intrinsic stability  enables effective capture of complex geometric evolution and sharp corner singularities produced by strong anisotropy. The approach thus provides a flexible and reliable framework for the numerical simulation of a broader class of geometric flows.
\keywords{Local discontinuous Galerkin method \and Geometric flows \and Anisotropic surface energy \and Energy dissipation \and High-order accuracy}
\subclass{74H15 \and 74S05 \and 74M15 \and 65M60}
\end{abstract}

\section{Introduction}
\label{sec:intro}

Geometric flows (e.g., mean-curvature flow, surface diffusion) are widely used as mathematical models to describe geometric shape evolution. They were formally introduced by Mullins in materials science
to model grain boundary motion and thermal grooving~\cite{Mullins1956,Mullins1957}. Nowadays, geometric flows have attracted growing attention because of their broadening applications in
image processing, materials science, data science and solid-state physics---for example, in describing microstructuce evolution in solid materials~\cite{Cahn1991,Fonseca2014}, solid-state dewetting of thin films~\cite{Bao2017330,Jiang2020}
and image smoothing~\cite{Clarenz2000}.

In this paper, we consider the evolution of planar curves driven by the anisotropic curve-shortening flow (CSF), i.e., the curve case of anisotropic mean curvature flows.
Let $\Gamma=\Gamma(t)$ be a closed planar curve parameterized by $\boldsymbol{X}(s,t)=\left(x(s,t),y(s,t)\right)^T$, where $t$ denotes time and $s$ is the arc-length parameter. The anisotropic CSF evolves $\Gamma$ according to the geometric law:
\begin{equation}
\label{1.1}
\partial_{t} \boldsymbol{X}=-\mu\boldsymbol{n},
\end{equation}
where $\boldsymbol{n}=(-\sin\theta,\cos\theta)^T$ is the outward unit normal with $\theta\in[-\pi,\pi]$ being the angle between $\boldsymbol{n}$ and the vertical axis, and chemical potential (weighted mean curvature) $\mu:=\mu(s,t)$ is defined as \cite{Bao2017,Jiang2016,Jiang2019}:
\begin{equation}
\label{1.2}
\mu=(\gamma(\theta)+\gamma''(\theta))\kappa.
\end{equation}
 Here $\kappa=-\partial_{ss}\boldsymbol{X}\cdot\boldsymbol{n}$ represents the curvature of $\Gamma(t)$ and $\gamma(\theta)\in  C^2([-\pi,\pi])$ is the positive, $2\pi$-periodic surface energy density satisfying $\gamma(-\pi)=\gamma(\pi)$ and $\gamma'(-\pi)=\gamma'(\pi)$. The initial condition for \eqref{1.1} is given as
\begin{equation}
\label{1.3}
\boldsymbol{X}(s,0)=\boldsymbol{X}_0(s)=(x_0(s),y_0(s))^T,\quad\quad 0\le s\le L_0,
\end{equation}
where $L_0$ is the length of the initial curve $\Gamma_0:=\Gamma(0)$.

Parameterization by arc-length yields the usual expression:
\begin{equation}
\label{1.4}
\boldsymbol{n}=-\partial_s\boldsymbol{X}^\perp,
\end{equation}
where $(a,b)^\perp=(b,-a)$.
Denote the total interfacial energy by
\begin{equation}
\label{1.5}
W_c(t)=\int_{
\Gamma(t)}\gamma(\theta)ds=\int_0^{
L(t)}\gamma(\theta)ds,\quad t\ge 0,
\end{equation}
where $L(t)=\int_{\Gamma(t)}1 ds$ denotes the length of $\Gamma(t)$. Following the variational framework of \cite{Bao2017,Bao2017330,Li2021}, one readily shows the energy dissipation law for \eqref{1.1}-\eqref{1.2}:
\begin{equation}
\label{1.6}
    \frac{d}{dt}W_c(t)=-\int_0^{L(t)}\mu^2ds\le 0,\quad t\ge 0.
\end{equation}
Following the approach in \cite{Li2021}, the weighted mean curvature \eqref{1.2} can be written in the divergence form as
\begin{equation}
\label{1.8}
\mu\boldsymbol{n}=-\partial_{s}(G(\theta)\partial_s\boldsymbol{X}),
\end{equation}
where the surface energy matrix $G(\theta)$ is defined as
\begin{equation}
\label{1.9}
G(\theta)=\begin{pmatrix}
\gamma(\theta)& -\gamma'(\theta)\\
\gamma'(\theta) & \gamma(\theta)
\end{pmatrix}.
\end{equation}
Consequently, the anisotropic CSF \eqref{1.1}-\eqref{1.2} can be reformulated as \cite{Barrett200729, Li2021}
\begin{equation}
\label{1.10}
\left\{
\begin{aligned}
&\boldsymbol{n}\cdot\partial_{t} \boldsymbol{X}=-\mu,\\
&\mu\boldsymbol{n}=-\partial_{s}(G(\theta)\partial_s\boldsymbol{X}).
\end{aligned}
\right.
\end{equation}
The isotropic case corresponds to $\gamma(\theta) \equiv 1$, where $\mu$ reduces to $\kappa$; otherwise, a nonconstant $\gamma$ yields the anisotropic case.
We characterize the surface energy anisotropy according to the surface stiffness $\widetilde{\gamma}(\theta)= \gamma(\theta) + \gamma''(\theta)$~\cite{Bao2017330,Bao2017},
i.e., it is weakly anisotropic when $\widetilde{\gamma}(\theta) > 0$ for all $\theta$; it is strongly anisotropic when there exists some  $\theta \in [-\pi, \pi]$ such that
$\widetilde{\gamma}(\theta) <0$; and the marginal case when $\min\limits_{\theta} \widetilde{\gamma}(\theta)=0$.
For example, a commonly used $l$-fold surface energy is defined as~\cite{Bao2017330}
\begin{equation}
\label{eq:surface_energy}
\gamma(\theta) = 1 + \beta\cos(l\theta).
\end{equation}
We illustrate the following regimes for the above surface energy: the isotropic case when $\beta = 0$; weakly anisotropic when $0 < \beta < \frac{1}{l^2-1}$; strongly anisotropic when $\beta > \frac{1}{l^2-1}$,
with $\beta = \frac{1}{l^2-1}$ marking the marginal transition.

In recent decades, significant progress has been made in the numerical simulation of both isotropic and anisotropic CSFs. For example, parametric finite element methods (PFEM) were designed by Dziuk \cite{Dziuk1991} for solving mean curvature flows and were further developed by Barrett, Garcke and N\"urnberg~\cite{Barrett2007222,Barrett200729}. The resulting family of schemes---well-known as the BGN scheme---is widely used because it could preserve good mesh quality by incorporating an implicit tangential velocities in the discrete scheme  \cite{Bao2017330, Bao2021,Barrett2008227,Barrett200828,Barrett2019}.  However, when applied to anisotropic curvature flows such as anisotropic CSF and anisotropic surface diffusion, BGN schemes typically fail to preserve energy stability, except for special cases involving Riemannian-metric anisotropies \cite{Barrett200828}. To address this issue, Li {\it et al.} \cite{Li2021} introduced a positive definite surface energy matrix $G(\theta)$ and proposed an energy-stable PFEM (ES-PFEM) under some restrictive conditions,  making it suitable primarily for isotropic or weakly anisotropic regimes. To tackle the issues triggered by the strong anisotropy, Bao {\it et al.} \cite{Bao202361,Bao202345} developed a symmetrized PFEM that enhances numerical stability through the introduction of stabilizing terms in the surface energy matrix; nevertheless, its success depends critically on good mesh distribution during evolution, and severe mesh-ratio growth under strong anisotropy may cause the scheme failure. Meanwhile, existing BGN-type PFEMs are typically limited to linear finite elements, which are only second-order accurate in space. Although spatial high-order methods such as isoparametric finite elements are available, their use has been largely limited to isotropic problems \cite{Garcke2025}. Consequently, a significant challenge remains: how to achieve high-order spatial accuracy while rigorously preserving geometric structures (e.g., energy dissipation, area/volume conservation) and maintaining stability in anisotropic surface energy cases.

The local discontinuous Galerkin (LDG) framework offers a promising route to overcome these limitations. By introducing auxiliary variables, reducing high-order differential equations to first-order systems, and discretizing with discontinuous finite elements and carefully choosing numerical fluxes, LDG methods could deliver high-order accuracy, flexibility, and provable stability properties. Originating from Cockburn and Shu \cite{Cockburn1998} and motivated by Bassi and Rebay \cite{Bassi1997}, LDG schemes have been successfully applied to a variety of nonlinear problems including KdV equation \cite{Yan2002}, Schr\"odinger equation \cite{Xu2005205}, viscous Burgers equation \cite{Hutridurga2023}, Allen-Cahn equation \cite{Wang2022}, Cahn-Hilliard equation \cite{Xia2007, Guo2014}, and geometric flows based on the graph representation \cite{Xu2009,Ji2012}. Recently, Li {\it et al.} \cite{Li2024} investigated superconvergence properties of a generalized numerical flux LDG method for one-dimensional linear fourth-order evolution equations, and Wimmer et al. \cite{Wimmer2025} proposed a structure-preserving LDG scheme for the Cahn-Hilliard equation with efficient time adaptivity. These advances highlight the versatility and robustness of the LDG method in handling complex high-order nonlinear problems.

Building on the variational formulation similar to PFEMs, we propose a class of parametric LDG methods for solving the isotropic/anisotropic CSF. The main challenges come from how to make use of the variational structure inherited from
the continuous geometric flow and the design of numerical fluxes which can simultaneously guarantee energy dissipation, high-order convergence, and numerical stability.
The main contributions of the paper are summarized as follows:
\begin{itemize}
    \item {\textbf {Higher-order spatial accuracy}}: the LDG scheme achieves the optimal $(k+1)$-order convergence with $P^k$ elements for isotropic/weakly anisotropic cases;
    \item {\textbf {Robust performance}}: For the challenging strongly anisotropic case, the proposed LDG scheme remains numerically stable even on severely degraded meshes. Furthermore, the numerical equilibrium shape obtained with the LDG scheme is in excellent agreement with the theoretical equilibrium shape;
    \item {\textbf {Unified scheme for anisotropic surface energy}}: Compared with the symmetrized PFEMs \cite{Bao202361,Bao202345} in the strongly anisotropic cases, the proposed LDG scheme does not require the introduction of additional stabilizing terms in the surface energy $G(\theta)$, and numerical results have demonstrated its superior performance;
     \item The LDG scheme is designed in order to attain unconditional energy dissipation at the semi-discrete level through carefully constructed numerical fluxes aligned with the variational structure;
    \item The well-posedness for the fully discrete scheme is rigorously proved under some mild conditions.
\end{itemize}
These features collectively yield a type of high-order, structure-preserving, and numerically robust methods for simulating the anisotropic CSF, to our knowledge, which
fills a gap in the literature where the high-order accuracy and the stability for anisotropic surface energy have not previously been achieved simultaneously.

The remainder of the paper is organized as follows. Section \ref{sec:ldg_method} presents the LDG spatial discretization and proves unconditional energy dissipation for the semi-discrete scheme. Section \ref{sec:fully_discrete} introduces a semi-implicit backward Euler time discretization, establishes well-posedness of the fully discrete scheme, and extends the framework to the anisotropic area-preserving curve-shortening flow (AP-CSF). Section \ref{sec:numerical} provides numerical experiments that validate high-order convergence, energy dissipation, and robustness in the challenging anisotropic tests. Finally, some conclusions are drawn in Section \ref{sec:conclusion}.

\section{The LDG method for curve-shortening flow}
\label{sec:ldg_method}

This section details the application of the LDG method to the anisotropic CSF. For convenience, we introduce a time-independent parameter $\rho \in I$, where $I = [0,1]$ denotes the periodic unit interval. The curve $\Gamma(t)$ is then parameterized by
\begin{equation*}
\Gamma(t) := \boldsymbol{X}(\rho, t) = \big(x(\rho,t), y(\rho,t)\big)^T \colon \: I \times [0,T] \to \mathbb{R}^2.
\end{equation*}
The arc length parameter $s$ is given by
$s(\rho)=\int^{\rho}_0\left|\partial_\rho\boldsymbol{X}\right|d\rho$, which implies $\partial_\rho s=\left|\partial_\rho\boldsymbol{X}\right|$. Using this, the governing equations, originally presented in \eqref{1.10}, can be rewritten as:
\begin{equation}\label{2.1}
\left\{
\begin{aligned}
&\boldsymbol{n}^\star\cdot\partial_{t} \boldsymbol{X}=-Q\mu,\\
&\mu\boldsymbol{n}^\star=-\partial_{\rho}\left(G(\theta)\frac{\partial_\rho\boldsymbol{X}}{Q}\right),
\end{aligned}
\right.
\end{equation}
where $\boldsymbol{n}^\star=(-\partial_\rho y,\partial_\rho x)^T$ and $Q=|\partial_\rho\boldsymbol{X}|$. Subsequently, we will construct the LDG method for the system \eqref{2.1}.

\subsection{Basic notations}
\label{subsec:notations}

Let $N$ be a positive integer. We define a grid by the points $0 = \rho_{1/2} < \rho_{3/2} < \dots < \rho_{N+1/2} = 1$. The domain $I$ is partitioned into $N$ subintervals (elements) $I_j = [\rho_{j-1/2}, \rho_{j+1/2}]$ for $j = 1, \ldots, N$. For simplicity, we assume a uniform mesh, though this assumption is not essential for the method. We denote the mesh size by $h = 1/N$.

We then define the following finite-dimensional function spaces:
\begin{flalign}
V_{h}=\left\{v \in L^{2}(I):\left.v\right|_{I_j} \in \mathcal{P}^{k}\left(I_j\right), \: j=1,\ldots,N\right\},\quad \boldsymbol{W}_h=[V_h]^2,\nonumber
\end{flalign}
where  $\mathcal{P}^{k}({I_j})$ denotes the space of polynomials of degree at most $k$ on the element $I_j$. To define the numerical fluxes at element interfaces, we introduce the notation $v^{\pm}|_{\rho_{j+1/2}} = \lim\limits_{\varepsilon \to 0^+} v(\rho_{j+1/2} \pm \varepsilon)$ for the left and right limits, respectively. Particularly, we denote $v^{-}|_{\rho_{1/2}}=v^-|_{\rho_{N+1/2}}$ and $v^+|_{\rho_{N+1/2}}=v^+|_{\rho_{1/2}}$ due to the periodic boundary condition.

\subsection{The LDG method}
\label{subsec:ldg_formulation}

This section presents the formulation of the LDG scheme for the system \eqref{2.1}. By introducing the auxiliary variables $\boldsymbol{q} = (q_1, q_2)^T = \partial_\rho \boldsymbol{X}$ and $\bm{\xi} = G(\theta) \boldsymbol{q}/Q$, the system \eqref{2.1} can be reformulated as:
\begin{subequations}
\label{2.2}
\begin{align}
&\boldsymbol{n}^\star\cdot\partial_{t} \boldsymbol{X}+Q\mu=0,\\
&\mu\boldsymbol{n}^\star+\partial_{\rho}\bm{\xi}=0,\\
&\bm{\xi}-G(\theta)\boldsymbol{q}/Q=0,\label{2.2c}\\
&\boldsymbol{q}-\partial_{\rho} \boldsymbol{X}=0,
\end{align}
\end{subequations}
where
\begin{equation}\label{2.3}
    Q=|\boldsymbol{q}|,\quad
   \boldsymbol{n}^\star=-\boldsymbol{q}^{\perp}=(-q_2,q_1)^T.
\end{equation}
Here, we assume $Q \neq 0$, which is a reasonable assumption since $Q$ appears in the denominator of equation \eqref{2.2c}. Henceforth, unless otherwise specified, $\mu, \boldsymbol{X}, \boldsymbol{q}, \bm{\xi}$ will denote their numerical approximations. The LDG scheme for \eqref{2.2} seeks $\mu \in V_h$, and $\boldsymbol{X}, \boldsymbol{q}, \bm{\xi} \in \boldsymbol{W}_h$ such that for each element $I_j$ ($j=1, \ldots, N$) and for all test functions $\eta \in V_h$, $\bm{\phi}, \boldsymbol{r}, \bm{\vartheta} \in \boldsymbol{W}_h$, the following equations are satisfied:
\begin{subequations}
\label{2.5}
\begin{align}
&\int_{I_{j}} \partial_{t} \boldsymbol{X}\cdot
\boldsymbol{n}^\star\eta d \rho+\int_{I_{j}}Q\mu\eta d \rho=0,\label{2.5a}\\
&\int_{I_{j}} \mu\boldsymbol{n}^\star\cdot\bm{\phi} d \rho-\int_{I_{j}} \bm{\xi}\cdot\partial_{\rho}\bm{\phi} d \rho+\widehat{\bm{\xi}}\cdot\bm{\phi}^-|_{\rho_{j+1/2}}-
\widehat{\bm{\xi}}\cdot\bm{\phi}^+|_{\rho_{j-1/2}}=0,\label{2.5b}\\
&\int_{I_{j}}\bm{\xi}\cdot\boldsymbol{r} d\rho
-\int_{I_{j}}\left(G(\theta)\frac{\boldsymbol{q}}{Q}\right)\cdot\boldsymbol{r} d\rho=0,\label{2.5c}\\
&\int_{I_{j}} \boldsymbol{q}\cdot \bm{\vartheta} d \rho+\int_{I_{j}} \boldsymbol{X}\cdot\partial_{\rho} \bm{\vartheta} d \rho-\widehat{\boldsymbol{X}}\cdot \bm{\vartheta}^-|_{\rho_{j+1/2}}+\widehat{\boldsymbol{X}}\cdot \bm{\vartheta}^+|_{\rho_{j-1/2}}=0,\label{2.5d}
\end{align}
\end{subequations}
where $Q$ and $\boldsymbol{n}^\star$ are computed by \eqref{2.3}.

The numerical fluxes (denoted by `hat' terms) in \eqref{2.5} are chosen as follows:
\begin{equation}
\label{2.6}
\widehat{\boldsymbol{X}}=\boldsymbol{X}^+,\quad
\widehat{\bm{\xi}}=\bm{\xi}^-+\alpha(\boldsymbol{X}^+-\boldsymbol{X}^-),
\end{equation}
where $\alpha>0$ is a penalty coefficient.
\begin{remark}
  Various numerical flux choices are available, with alternating fluxes such as $$\widehat{\boldsymbol{X}}=\boldsymbol{X}^+,\quad \widehat{\boldsymbol{\xi}}=\bm{\xi}^-$$
    being commonly employed. However, such fluxes may introduce unphysical rotational artifacts in numerical simulations (cf. Section 4) due to directional dependence on the local values of $\bm{\xi}$, while also presenting difficulties for establishing well-posedness in theoretical analysis. To address these issues and inspired by \cite{Antonietti2015,Cockburn199835}, we introduce a penalty term in $\widehat{\bm{\xi}}$ involving $\mathbf{X}$, thereby ensuring both numerical stability and theoretical tractability.
 \end{remark}

The total discrete energy $W_c^h(t)$ for the solution of the semi-discrete system \eqref{2.5}-\eqref{2.6} is defined as:
\begin{equation}\label{2.7}
W_c^h(t)=\sum\limits_{j=1}^N\int_{I_j}\gamma(\theta)Qd\rho+\sum_{j=1}^N\frac{\alpha}{2}|\mathbf{X}^+-\mathbf{X}^-|^2\biggr|_{\rho_{j+\frac{1}{2}}}.
\end{equation}
It can be observed that $W_c^h(t)$ in \eqref{2.7} coincides with the energy \eqref{1.5} for sufficiently smooth solutions.

\begin{theorem}[Energy dissipation]
    Let $(\mu, \boldsymbol{X}, \boldsymbol{q}, \bm{\xi}) \in V_h \times \boldsymbol{W}_h \times \boldsymbol{W}_h \times \boldsymbol{W}_h$ be a solution of the semi-discrete system \eqref{2.5}-\eqref{2.6}. Then the total discrete energy $W_c^h(t)$, as defined in \eqref{2.7}, is dissipative during the evolution, i.e.,
\begin{equation}
\label{2.8}
    W_c^h(t)\le W_c^h(t_1)\le W_c^h(0),\quad t\ge t_1\ge 0.
\end{equation}
\end{theorem}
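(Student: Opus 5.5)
The plan is to show that, along any semi-discrete solution,
\begin{equation*}
\frac{d}{dt}W_c^h(t)=-\sum_{j=1}^N\int_{I_j}Q\mu^2\,d\rho\le 0 .
\end{equation*}
Integrating this identity over $[t_1,t]$ and over $[0,t_1]$ then gives \eqref{2.8}. The proof tests each equation of \eqref{2.5} with a suitable function from the discrete spaces, sums over $j$, and watches the interface terms assemble into the time derivative of the penalty part of $W_c^h$.

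\textbf{Step 1: gradient of the energy density.} Write the density as a function of $\boldsymbol{q}$, namely $f(\boldsymbol{q})=|\boldsymbol{q}|\gamma(\theta(\boldsymbol{q}))$. Since $\boldsymbol{q}/Q=(\cos\theta,\sin\theta)^T$, we have $\theta=\operatorname{atan2}(q_2,q_1)$, so
\begin{equation*}
\nabla_{\boldsymbol{q}}Q=\boldsymbol{q}/Q,\qquad \nabla_{\boldsymbol{q}}\theta=(-q_2,q_1)^T/Q^2=\boldsymbol{n}/Q .
\end{equation*}
It follows that $\nabla_{\boldsymbol{q}}f=\gamma\,\boldsymbol{q}/Q+\gamma'\boldsymbol{n}=G(\theta)\boldsymbol{q}/Q$. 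Hence
\begin{equation*}
\frac{d}{dt}\sum_j\int_{I_j}\gamma(\theta)Q\,d\rho=\sum_j\int_{I_j}\frac{G(\theta)\boldsymbol{q}}{Q}\cdot\partial_t\boldsymbol{q}\,d\rho .
\end{equation*}
Because $\partial_t\boldsymbol{q}\in\boldsymbol{W}_h$, I take $\boldsymbol{r}=\partial_t\boldsymbol{q}$ in \eqref{2.5c} and replace the integrand by $\bm{\xi}\cdot\partial_t\boldsymbol{q}$. This is where $Q\neq0$ is used.

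\textbf{Step 2: the three LDG equations.} I differentiate \eqref{2.5d} in time, which is allowed because the test space is time-independent, and take $\bm{\vartheta}=\bm{\xi}$. I take $\bm{\phi}=\partial_t\boldsymbol{X}$ in \eqref{2.5b} and $\eta=\mu$ in \eqref{2.5a}. Summing over $j$, \eqref{2.5a} gives $\sum_j\int\mu\,\boldsymbol{n}^\star\cdot\partial_t\boldsymbol{X}=-\sum_j\int Q\mu^2$.

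Next I integrate by parts elementwise in the volume term $\int_{I_j}\partial_t\boldsymbol{X}\cdot\partial_\rho\bm{\xi}$ coming from \eqref{2.5d}. The volume integrals then cancel exactly against $\int_{I_j}\bm{\xi}\cdot\partial_\rho\partial_t\boldsymbol{X}$ in \eqref{2.5b}, using the periodic conventions for $\rho_{1/2}$ and $\rho_{N+1/2}$.

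\textbf{Step 3: the interface terms (main obstacle).} The remaining work is bookkeeping at each node $\rho_{j+1/2}$, with the fluxes \eqref{2.6} inserted. With $\widehat{\boldsymbol{X}}=\boldsymbol{X}^+$ and $\widehat{\bm{\xi}}=\bm{\xi}^-+\alpha[\boldsymbol{X}]$, where $[\boldsymbol{X}]=\boldsymbol{X}^+-\boldsymbol{X}^-$, I expect the alternating parts to cancel: every term with $\bm{\xi}^-\cdot\partial_t\boldsymbol{X}^{\pm}$ or $\bm{\xi}^+\cdot\partial_t\boldsymbol{X}^+$ should drop out. What survives should be
\begin{equation*}
\sum_j\int_{I_j}\bm{\xi}\cdot\partial_t\boldsymbol{q}\,d\rho=\sum_j\int_{I_j}\mu\,\boldsymbol{n}^\star\cdot\partial_t\boldsymbol{X}\,d\rho-\sum_j\alpha[\boldsymbol{X}]\cdot[\partial_t\boldsymbol{X}]\Big|_{\rho_{j+1/2}} .
\end{equation*}
The last sum equals $-\frac{d}{dt}\sum_j\frac{\alpha}{2}|[\boldsymbol{X}]|^2$, so moving it to the left-hand side completes $\frac{d}{dt}W_c^h=-\sum_j\int Q\mu^2\le0$, since $Q\ge0$.

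This step is where sign errors are most likely. I would check it by writing out the contributions from $I_j$ and $I_{j+1}$ at a single node and verifying both the cancellation and that the penalty enters with exactly coefficient $\alpha$ and the correct sign. The cancellation relies on $\widehat{\boldsymbol{X}}$ and the non-penalty part of $\widehat{\bm{\xi}}$ being taken from opposite sides.
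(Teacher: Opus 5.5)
Your proposal is correct and follows essentially the paper's argument: the gradient identity $\nabla_{\boldsymbol{q}}(\gamma(\theta)Q)=G(\theta)\boldsymbol{q}/Q$, then the test functions $\boldsymbol{r}=\partial_t\boldsymbol{q}$, $\bm{\phi}=\partial_t\boldsymbol{X}$, $\eta=\mu$, and \eqref{2.5d} differentiated in time and tested with $\bm{\xi}$. The paper differs only in bookkeeping: it substitutes $\bm{\vartheta}=\bm{\xi}$ before differentiating and then subtracts the equation tested with $\partial_t\bm{\xi}$, and it integrates by parts in \eqref{2.5b} rather than in the term coming from \eqref{2.5d}. The interface computation you anticipate does close: with $\widehat{\boldsymbol{X}}=\boldsymbol{X}^+$ and $\widehat{\bm{\xi}}=\bm{\xi}^-+\alpha[\boldsymbol{X}]$, the contributions at each node $\rho_{j+1/2}$ reduce exactly to $-\alpha[\boldsymbol{X}]\cdot[\partial_t\boldsymbol{X}]$, which yields $\frac{d}{dt}W_c^h=-\sum_j\int_{I_j}Q\mu^2\,d\rho$.
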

\begin{proof}
Differentiating the discrete energy $W_c^h(t)$ defined in \eqref{2.7} with respect to time yields:
\begin{align*}
&\quad\frac{d}{dt}W_c^h(t)=\frac{d}{dt}\sum_{j=1}^N
\Big(\int_{I_j}\gamma(\theta)Qd\rho+\frac{\alpha}{2}|\boldsymbol{X}^+-\boldsymbol{X}^-|^2\biggr|_{\rho_{j+\frac{1}{2}}}
\Big)\nonumber\\
&=\sum_{j=1}^N\Big(\int_{I_j}\left(\gamma(\theta)\partial_tQ+\gamma'(\theta)
Q\partial_t\theta\right) d\rho+\alpha\left((\boldsymbol{X}^+-\boldsymbol{X}^-)\cdot
\partial_t(\boldsymbol{X}^+-\boldsymbol{X}^-)\right)|_{\rho_{j+\frac{1}{2}}}\Big).
\end{align*}
Utilizing the identities:
$$
\partial_tQ=\partial_t|\boldsymbol{q}|=\frac{\boldsymbol{q}\cdot\partial_t\boldsymbol{q}}{Q},\quad
\theta=\arctan(\frac{q_2}{q_1}),\quad
\partial_t\theta=-\frac{\boldsymbol{q}^\perp\cdot\partial_t\boldsymbol{q}}{Q^2},
$$
we obtain:
\begin{align*}
\frac{d}{dt}\sum_{j=1}^N
\int_{I_j}\gamma(\theta)Qd\rho&=\sum_{j=1}^N\int_{I_j}\gamma(\theta)\frac{\boldsymbol{q}
\cdot\partial_t\boldsymbol{q}}{Q}-
\gamma'(\theta)\frac{\boldsymbol{q}^\perp\cdot\partial_t\boldsymbol{q}}{Q}d\rho\\
&=\sum_{j=1}^N\int_{I_j}\left(G(\theta)\frac{\boldsymbol{q}}{Q}\right)\cdot\partial_t\boldsymbol{q}d\rho.
\end{align*}
Taking $\bm{\vartheta} = \bm{\xi}$ in \eqref{2.5d} and differentiating the resulting equation with respect to time yields
\begin{flalign}
\label{2.11}
&\int_{I_j}(\partial_t\boldsymbol{q}\cdot\bm{\xi}
+\boldsymbol{q}\cdot\partial_t\bm{\xi}) d\rho+
\int_{I_j}\partial_t\boldsymbol{X}\cdot
\partial_\rho\bm{\xi} d\rho+
\int_{I_j}\boldsymbol{X}\cdot\partial_\rho
\partial_t\bm{\xi} d\rho\nonumber\\
&\quad-(\partial_t\widehat{\boldsymbol{X}}\cdot\bm{\xi}^-
+\widehat{\boldsymbol{X}}\cdot\partial_t\bm{\xi}^-)
|_{\rho_{j+\frac{1}{2}}}+(\partial_t\widehat{\boldsymbol{X}}\cdot\bm{\xi}^+
+\widehat{\boldsymbol{X}}\cdot\partial_t\bm{\xi}^+)
|_{\rho_{j-\frac{1}{2}}}=0.
\end{flalign}
Setting $\bm{\vartheta} = \partial_t \bm{\xi}$ in \eqref{2.5d} gives:
\begin{equation}
\label{2.12}
\int_{I_j}\boldsymbol{q}\cdot\partial_t\bm{\xi} d\rho+
\int_{I_j}\boldsymbol{X}\cdot\partial_\rho\partial_t
\bm{\xi} d\rho
-(\widehat{\boldsymbol{X}}\cdot\partial_t\bm{\xi}^-)
|_{\rho_{j+\frac{1}{2}}}+(\widehat{\boldsymbol{X}}\cdot\partial_t\bm{\xi}^+)
|_{\rho_{j-\frac{1}{2}}}=0.
\end{equation}
Subtracting \eqref{2.12} from \eqref{2.11} leads to
\begin{equation}
\label{2.13}
\int_{I_j}
\partial_t\boldsymbol{q}\cdot\bm{\xi} d\rho+
\int_{I_j}\partial_t\boldsymbol{X}\cdot
\partial_\rho\bm{\xi} d\rho
-\partial_t\widehat{\boldsymbol{X}}\cdot\bm{\xi}^-
|_{\rho_{j+\frac{1}{2}}}+\partial_t\widehat{\boldsymbol{X}}\cdot\bm{\xi}^+
|_{\rho_{j-\frac{1}{2}}}=0.
\end{equation}
By choosing $\boldsymbol{r} = \partial_t \boldsymbol{q}$ in \eqref{2.5c}, we get
\begin{equation}
\label{2.14}
\int_{I_{j}}\bm{\xi}\cdot\partial_t\boldsymbol{q} d\rho
-\int_{I_{j}}\left(G(\theta)\frac{\boldsymbol{q}}{Q}\right)\cdot\partial_t\boldsymbol{q} d\rho=0.
\end{equation}
Combining \eqref{2.11}-\eqref{2.14}, we derive
\begin{flalign}
\label{2.15}
\frac{d}{dt}\sum_{j=1}^N
\int_{I_j}\gamma(\theta)Qd\rho=\sum_{j=1}^N-\int_{I_j}\partial_t\boldsymbol{X}\cdot
\partial_\rho\bm{\xi} d\rho
+\partial_t\widehat{\boldsymbol{X}}\cdot\bm{\xi}^-
|_{\rho_{j+\frac{1}{2}}}-\partial_t\widehat{\boldsymbol{X}}
\cdot\bm{\xi}^+|_{\rho_{j-\frac{1}{2}}}.
\end{flalign}
Choosing $\bm{\phi} = \partial_t \boldsymbol{X}$ in \eqref{2.5b} and integrating the second term by parts, one gets
\begin{equation}\label{2.16}
\begin{split}
&\int_{I_{j}} \mu\boldsymbol{n}^\star \cdot\partial_t\boldsymbol{X}d \rho-\int_{I_{j}} \bm{\xi}\cdot\partial_{\rho}\partial_t\boldsymbol{X}d\rho+
\widehat{\bm{\xi}}\cdot
\partial_t\boldsymbol{X}^-|_{\rho_{j+\frac{1}{2}}}-\widehat{\bm{\xi}}\cdot
\partial_t\boldsymbol{X}^+|_{\rho_{j-\frac{1}{2}}}\\
=&\int_{I_{j}} \mu\boldsymbol{n}^\star\cdot\partial_t\boldsymbol{X}d \rho+\int_{I_{j}} \partial_{\rho}\bm{\xi}\cdot\partial_t\boldsymbol{X}d\rho
-(\bm{\xi}^-\cdot
\partial_t\boldsymbol{X}^--
\widehat{\bm{\xi}}\cdot
\partial_t\boldsymbol{X}^-)|_{\rho_{j+\frac{1}{2}}}\\
&+(\bm{\xi}^+\cdot
\partial_t\boldsymbol{X}^+-
\widehat{\bm{\xi}}\cdot
\partial_t\boldsymbol{X}^+)|_{\rho_{j-\frac{1}{2}}}=0.
\end{split}
\end{equation}
Finally, setting $\eta = \mu$ in \eqref{2.5a} yields
\begin{flalign}
\label{2.17}
&\int_{I_{j}} \partial_t\boldsymbol{X}\cdot
\boldsymbol{n}^\star\mu d \rho=-\int_{I_{j}} Q\mu^2d \rho.
\end{flalign}
Summing \eqref{2.15}, \eqref{2.16}, and \eqref{2.17} over all elements $I_j$ for $j=1, \ldots, N$, we obtain
\begin{align*}
&\quad\frac{d}{dt}\sum_{j=1}^N
\int_{I_j}\gamma(\theta)Qd\rho\\
&=\sum_{j=1}^N
-\int_{I_{j}} Q\mu^2 d\rho+(\partial_t\widehat{\boldsymbol{X}}\cdot\bm{\xi}^-
-\bm{\xi}^-\cdot
\partial_t\boldsymbol{X}^-+
\widehat{\bm{\xi}}\cdot
\partial_t\boldsymbol{X}^-)|_{\rho_{j+\frac{1}{2}}}\\
&\quad-\sum_{j=1}^N(\partial_t\widehat{\boldsymbol{X}}\cdot\bm{\xi}^+
-\bm{\xi}^+\cdot
\partial_t\boldsymbol{X}^++
\widehat{\bm{\xi}}\cdot
\partial_t\boldsymbol{X}^+)|_{\rho_{j-\frac{1}{2}}}\\
&=\sum_{j=1}^N
-\int_{I_{j}} Q\mu^2 d\rho-\frac{d}{dt}\frac{\alpha}{2}|\boldsymbol{X}^+-\boldsymbol{X}^-|^2\biggr|_{\rho_{j+\frac{1}{2}}}.
\end{align*}
Recalling the definition of the numerical fluxes from \eqref{2.6}, we arrive at
$$
\frac{d}{dt}W_c^h(t)=-\sum_{j=1}^N\int_{I_{j}} Q\mu^2 d\rho\le0,
$$
which directly gives the asserted energy dissipation property \eqref{2.8}.
\end{proof}

\section{A fully discrete scheme}
\label{sec:fully_discrete}
In this section, we further discretize the semi-discrete system \eqref{2.5} in time using a semi-implicit backward Euler method to obtain a fully discrete scheme.

Let $\tau>0$ denote the time step size, and define the discrete time levels as $t_m=m\tau$ for $m=0,1,\ldots,M$. For $m\ge 0$, we denote by $f^m$ the approximation of the solution $f$ at time $t_m$. The resulting fully discrete scheme reads as follows: Find $\mu^{m+1}\in V_h$ and $\boldsymbol{X}^{m+1},\boldsymbol{q}^{m+1},\bm{\xi}^{m+1}\in\boldsymbol{W}_h$ such that, for all test functions $\eta\in V_h$ and $\bm{\phi},\boldsymbol{r},\bm{\vartheta}\in \boldsymbol{W}_h$, the following weak formulation holds:
 \begin{subequations}\label{3.1}
\begin{equation}
\int_{I_{j}} \frac{\boldsymbol{X}^{m+1}-\boldsymbol{X}^{m}}{\tau} \cdot
\boldsymbol{n}^{\star m}\eta d \rho+\int_{I_{j}} Q^m\mu^{m+1} \eta d \rho=0,\label{3.1a}
\end{equation}
\begin{equation}
\int_{I_{j}} \mu^{m+1}\boldsymbol{n}^{\star m}\cdot\bm{\phi} d \rho-\int_{I_{j}} \bm{\xi}^{ m+1}\cdot\partial_{\rho}\bm{\phi} d \rho+
\widehat{\bm{\xi}}^{m+1}\cdot\bm{\phi}^-|_{\rho_{j+\frac{1}{2}}}
-\widehat{\bm{\xi}}^{m+1}\cdot\bm{\phi}^+|_{\rho_{j-\frac{1}{2}}}=0,
\label{3.1b}
\end{equation}
\begin{equation}
\int_{I_{j}}\bm{\xi}^{m+1}\cdot\boldsymbol{r} d\rho
-\int_{I_{j}}\left(G(\theta^m)\frac{\boldsymbol{q}^{m+1}}{Q^{m}}\right)\cdot\boldsymbol{r} d\rho=0,\label{3.1c}
\end{equation}
\begin{equation}
\int_{I_{j}} \boldsymbol{q}^{m+1}\cdot \bm{\vartheta} d \rho+\int_{I_{j}} \boldsymbol{X}^{m+1}\cdot\partial_{\rho} \bm{\vartheta} d \rho-\widehat{\boldsymbol{X}}^{m+1}\cdot \bm{\vartheta}^-|_{\rho_{j+\frac{1}{2}}}+\widehat{\boldsymbol{X}}^{m+1}\cdot \bm{\vartheta}^+|_{\rho_{j-\frac{1}{2}}}=0,\label{3.1d}
\end{equation}
\end{subequations}
where
\begin{flalign}
\label{3.2}
\widehat{\boldsymbol{X}}^{m+1}&=(\boldsymbol{X}^{m+1})^+,\quad
\widehat{\bm{\xi}}^{ m+1}=(\bm{\xi}^{ m+1})^-+\alpha((\boldsymbol{X}^{m+1})^+-(\boldsymbol{X}^{m+1})^-).
\end{flalign}
Clearly, it is a semi-implicit scheme. Consequently, at each time step, one only needs to solve a linear system, which ensures computational efficiency.

\subsection{Well-posedness}
For each $m\ge 0$, assuming $N\ge 3$, the well-posedness of the fully discrete scheme \eqref{3.1} is established under the following condition:
\begin{theorem}[Well-posedness]
Suppose $Q^m>0$. Then the fully discrete scheme \eqref{3.1} admits  a unique solution unless there exist $\boldsymbol{c}\in\mathbb{R}^2$ and $\boldsymbol{a}=(a_1,\ldots, a_N)^T\in\mathbb{R}^N$ such that
 \[\boldsymbol{n}^{\star m}|_{I_j}=a_j \boldsymbol{c}.\]

\end{theorem}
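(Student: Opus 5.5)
Since \eqref{3.1} is a square linear system for $(\mu^{m+1},\boldsymbol{X}^{m+1},\boldsymbol{q}^{m+1},\bm{\xi}^{m+1})$, it suffices to prove uniqueness. We therefore study the homogeneous problem, i.e.\ \eqref{3.1} with $\boldsymbol{X}^m$ replaced by $\boldsymbol{0}$, and show that its only solution is the trivial one. We reuse the test-function choices from the proof of the energy dissipation theorem, replacing time derivatives by the unknowns themselves.

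\textbf{Energy identity.} Write $(\mu,\boldsymbol{X},\boldsymbol{q},\bm{\xi})$ for a homogeneous solution. Take $\eta=\mu$ in \eqref{3.1a}, $\bm{\phi}=\boldsymbol{X}$ in \eqref{3.1b}, $\boldsymbol{r}=\boldsymbol{q}$ in \eqref{3.1c} and $\bm{\vartheta}=\bm{\xi}$ in \eqref{3.1d}, and sum over $j$. In \eqref{3.1b}, integrate the term $\int \bm{\xi}\cdot\partial_\rho\boldsymbol{X}$ by parts. The interface terms then combine exactly as in the semi-discrete proof, with $\widehat{\boldsymbol{X}}=\boldsymbol{X}^+$ and the penalty in $\widehat{\bm{\xi}}$. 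This should give
\[
\frac{1}{\tau}\sum_j\int_{I_j}Q^m\mu^2\,d\rho+\sum_j\int_{I_j}\frac{\gamma(\theta^m)}{Q^m}|\boldsymbol{q}|^2\,d\rho+\alpha\sum_j|\boldsymbol{X}^+-\boldsymbol{X}^-|^2\big|_{\rho_{j+1/2}}=0 .
\]
To obtain the middle term we use $(G(\theta)\boldsymbol{q})\cdot\boldsymbol{q}=\gamma|\boldsymbol{q}|^2$, since the antisymmetric part contributes nothing. All three terms are nonnegative because $Q^m>0$, $\gamma>0$ and $\alpha>0$. Hence $\mu=0$, $\boldsymbol{q}=0$, and $\boldsymbol{X}$ is continuous across every interface.

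\textbf{Consequences.} With $\boldsymbol{q}=0$ and $\widehat{\boldsymbol{X}}=\boldsymbol{X}^+=\boldsymbol{X}^-$, equation \eqref{3.1d} integrated back by parts gives $\int_{I_j}\partial_\rho\boldsymbol{X}\cdot\bm{\vartheta}=0$ for all $\bm{\vartheta}$. Thus $\partial_\rho\boldsymbol{X}=0$ on each element, and continuity together with periodicity gives $\boldsymbol{X}\equiv\boldsymbol{X}_c$, a constant vector. Equation \eqref{3.1c} then gives $\bm{\xi}=0$, since $\bm{\xi}$ is the $L^2$ projection of $G\boldsymbol{q}/Q^m=0$. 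Finally, \eqref{3.1a} with $\mu=0$ gives $\int_{I_j}\boldsymbol{X}_c\cdot\boldsymbol{n}^{\star m}\eta\,d\rho=0$ for all $\eta\in V_h$.

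\textbf{Main obstacle.} The remaining task is to deduce $\boldsymbol{X}_c=0$ from this last condition. The structure of $\boldsymbol{n}^{\star m}$ matters here. Since $\boldsymbol{n}^{\star m}=-(\boldsymbol{q}^m)^\perp$, where $\boldsymbol{q}^m$ is the discrete derivative of $\boldsymbol{X}^m$, its components are polynomials of degree $\le k$ on each $I_j$. Taking $\eta=\boldsymbol{X}_c\cdot\boldsymbol{n}^{\star m}\in V_h$ gives $\boldsymbol{X}_c\cdot\boldsymbol{n}^{\star m}\equiv0$ on every element. If $\boldsymbol{X}_c\neq0$, then $\boldsymbol{n}^{\star m}$ is everywhere orthogonal to $\boldsymbol{X}_c$. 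That means $\boldsymbol{n}^{\star m}|_{I_j}=a_j\boldsymbol{c}$ with $\boldsymbol{c}=\boldsymbol{X}_c^\perp$, where $a_j$ is a scalar polynomial, or a constant in the $P^0$ reading of the statement. This is precisely the excluded degenerate configuration, so $\boldsymbol{X}_c=0$. The delicate point is making sure the orthogonality argument matches the statement's formulation with scalar $a_j$; I would present it as $\boldsymbol{n}^{\star m}$ being parallel to a fixed vector $\boldsymbol{c}$ on each element. The hypothesis $N\ge3$ would be noted as guaranteeing that a genuine closed polygon is not collinear, so the exceptional case is nongeneric.
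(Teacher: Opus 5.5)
Your proposal is correct and is essentially the paper's proof. It uses the same homogeneous problem and the same test functions, and it follows the same chain: $\boldsymbol{q}=\boldsymbol{0}$, $\mu=0$, continuity of $\boldsymbol{X}$, $\bm{\xi}=\boldsymbol{0}$, $\boldsymbol{X}\equiv\boldsymbol{X}_c$, and finally $\boldsymbol{X}_c\cdot\boldsymbol{n}^{\star m}\equiv0$. There is one harmless slip: the paper takes $\eta=\tau\mu$ and $\bm{\phi}=-\boldsymbol{X}^{m+1}$, which shows that the coefficient of $\sum_j\int_{I_j}Q^m\mu^2\,d\rho$ should be $\tau$, not $1/\tau$.

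The constant-versus-polynomial $a_j$ issue you flag is passed over in the paper as well, which simply says ``by recalling the assumption.'' Your reading with $a_j\in\mathcal{P}^k(I_j)$ is the one the argument actually supports, and for $k\ge1$ constant $a_j$ is not enough. If $\boldsymbol{n}^{\star m}|_{I_j}=a_j(\rho)\boldsymbol{c}$ with nonvanishing but non-constant $a_j$, then $\boldsymbol{X}\equiv\boldsymbol{c}^\perp$, $\mu=0$, $\boldsymbol{q}=\bm{\xi}=\boldsymbol{0}$ is a nontrivial solution of the homogeneous problem.
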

\begin{proof} It suffices to show that the associated homogeneous problem admits only the trivial solution, i.e., the system \eqref{3.1} with \eqref{3.1a} replaced by the following \eqref{3.5} has only zero solutions:
    \begin{equation}\label{3.5}
\int_{I_{j}} \frac{\boldsymbol{X}^{m+1}}{\tau} \cdot
\boldsymbol{n}^{\star m}\eta d \rho+\int_{I_{j}} Q^m\mu^{m+1} \eta d \rho=0.
\end{equation}
We proceed by selecting specific test functions in the weak formulation. First, setting $\bm{\vartheta}=\bm{\xi}^{m+1}$ in \eqref{3.1d} yields
\begin{align*}
&\int_{I_j}\boldsymbol{q}^{m+1}\cdot\bm{\xi}^{m+1} d\rho+
\int_{I_j}\boldsymbol{X}^{m+1}\cdot\partial_\rho
\bm{\xi}^{m+1} d\rho
-\widehat{\boldsymbol{X}}^{m+1}\cdot(\bm{\xi}^{m+1})^-
|_{\rho_{j+\frac{1}{2}}}\\
&\quad+\widehat{\boldsymbol{X}}^{m+1}\cdot(\bm{\xi}^{m+1})^+
|_{\rho_{j-\frac{1}{2}}}=0.
\end{align*}
Choosing $\boldsymbol{r}=\boldsymbol{q}^{m+1}$ in \eqref{3.1c} gives
\[
\int_{I_{j}}\bm{\xi}^{m+1}\cdot\boldsymbol{q}^{m+1} d\rho
-\int_{I_{j}}\left(G(\theta^{m})\frac{\boldsymbol{q}^{m+1}}{Q^{m}}\right)\cdot\boldsymbol{q}^{m+1} d\rho=0.
\]
Combining the above two equations leads to
\begin{equation}
\label{3.8}
\begin{split}
&\int_{I_{j}}\left(G(\theta^{m})\frac{\boldsymbol{q}^{m+1}}{Q^{m}}\right)\cdot\boldsymbol{q}^{m+1} d\rho+\int_{I_j}\boldsymbol{X}^{m+1}\cdot
\partial_\rho\bm{\xi}^{m+1} d\rho
-\widehat{\boldsymbol{X}}^{m+1}\cdot(\bm{\xi}^{m+1})^-
|_{\rho_{j+\frac{1}{2}}}\\
&\quad+\widehat{\boldsymbol{X}}^{m+1}\cdot(\bm{\xi}^{m+1})^+
|_{\rho_{j-\frac{1}{2}}}=0.
\end{split}
\end{equation}
Similarly, substituting $\bm{\phi}=-\boldsymbol{X}^{m+1}$ into \eqref{3.1b} and applying integration by parts leads to
\begin{equation}\label{3.9}
\begin{split}
0&=-\int_{I_{j}} \mu^{m+1}\boldsymbol{n}^{\star m} \cdot\boldsymbol{X}^{m+1}d \rho+\int_{I_{j}} \bm{\xi}^{m+1}\cdot\partial_{\rho}\boldsymbol{X}^{m+1}d\rho-
\widehat{\bm{\xi}}^{m+1}\cdot
(\boldsymbol{X}^{m+1})^-|_{\rho_{j+\frac{1}{2}}}\\
&\quad+\widehat{\bm{\xi}}^{m+1}\cdot
(\boldsymbol{X}^{m+1})^+|_{\rho_{j-\frac{1}{2}}}\\
&=-\int_{I_{j}} \mu^{m+1}\boldsymbol{n}^{\star m}\cdot\boldsymbol{X}^{m+1}d \rho-\int_{I_{j}} \partial_{\rho}\bm{\xi}^{m+1}\cdot\boldsymbol{X}^{m+1}d\rho\\
&\quad+\left((\bm{\xi}^{m+1})^-\cdot
(\boldsymbol{X}^{m+1})^--
\widehat{\bm{\xi}}^{m+1}\cdot
(\boldsymbol{X}^{m+1})^-\right)|_{\rho_{j+\frac{1}{2}}}\\
&\quad-\left((\bm{\xi}^{m+1})^+\cdot
(\boldsymbol{X}^{m+1})^+-
\widehat{\bm{\xi}}^{m+1}\cdot
(\boldsymbol{X}^{m+1})^+\right)|_{\rho_{j-\frac{1}{2}}}.
\end{split}
\end{equation}
Finally, taking $\eta=\tau\mu^{m+1}$ in \eqref{3.5}, we get
\begin{flalign}
\label{3.10}
&\int_{I_{j}} \boldsymbol{X}^{m+1}\cdot
\boldsymbol{n}^{\star m}\mu^{m+1} d \rho+\tau\int_{I_{j}} Q^m(\mu^{m+1})^2 d \rho=0.
\end{flalign}
Summing the resulting identities \eqref{3.8}-\eqref{3.10} over all elements $I_j,j=1,\ldots,N$, we derive that
\begin{align*}
&\quad\sum_{j=1}^N\int_{I_{j}}\left(G(\theta^{m})\frac{\boldsymbol{q}^{m+1}}{Q^{m}}\right)\cdot\boldsymbol{q}^{m+1} d\rho+
\tau\int_{I_{j}}Q^m(\mu^{m+1})^2 d\rho\\
&\quad-\left(\widehat{\boldsymbol{X}}^{m+1}\cdot(\bm{\xi}^{m+1})^-
-(\bm{\xi}^{m+1})^-\cdot
(\boldsymbol{X}^{m+1})^-+
\widehat{\bm{\xi}}^{m+1}\cdot
(\boldsymbol{X}^{m+1})^-\right)|_{\rho_{j+\frac{1}{2}}}\\
&\quad+\left(\widehat{\boldsymbol{X}}^{m+1}\cdot(\bm{\xi}^{m+1})^+-(\bm{\xi}^{m+1})^+\cdot
(\boldsymbol{X}^{m+1})^++
\widehat{\bm{\xi}}^{m+1}\cdot
(\boldsymbol{X}^{m+1})^+\right)|_{\rho_{j-\frac{1}{2}}}\\
&=
\sum_{j=1}^N\int_{I_{j}}\left(G(\theta^{m})\frac{\boldsymbol{q}^{m+1}}{Q^{m}}\right)\cdot\boldsymbol{q}^{m+1} d\rho+
\tau\int_{I_{j}} Q^m(\mu^{m+1})^2 d\rho\\
&\qquad\quad+\alpha|(\boldsymbol{X}^{m+1})^+-(\boldsymbol{X}^{m+1})^-|^2\biggr|_{\rho_{j+\frac{1}{2}}}\nonumber\\
&=0.
\end{align*}
Noting $G(\theta^m)$ is a positive definite matrix and $Q^m>0$, $\alpha>0$, we obtain
$$\int_{I_{j}}\left(G(\theta^{m})\frac{\boldsymbol{q}^{m+1}}{Q^{m}}\right)\cdot\boldsymbol{q}^{m+1} d\rho=0,\quad
\int_{I_{j}} Q^m(\mu^{m+1})^2 d\rho=0,
$$
$$
\alpha|(\boldsymbol{X}^{m+1})^+-(\boldsymbol{X}^{m+1})^-|^2\biggr|_{\rho_{j+\frac{1}{2}}}=0,\quad j=1,\ldots,N,
$$
which yields
\begin{flalign}
\label{3.11}
\boldsymbol{q}^{m+1}\equiv\bm{0},\quad \mu^{m+1}\equiv0,
\end{flalign}
and $\boldsymbol{X}^{m+1}$ is continuous.
Setting $\boldsymbol{r}=\bm{\xi}^{m+1}$ in \eqref{3.1c}, one gets
\[
\int_{I_{j}}|\bm{\xi}^{m+1}|^2 d\rho=0,\]
which gives $\bm{\xi}^{m+1}\equiv\bm{0}$.
Additionally, choosing $\bm{\vartheta}=-\boldsymbol{X}_\rho^{m+1}$ in \eqref{3.1d} and integrating by parts yields
\begin{flalign}
&-\int_{I_{j}} \boldsymbol{X}^{m+1}\cdot \boldsymbol{X}_{\rho\rho}^{m+1} d \rho+\widehat{\boldsymbol{X}}^{m+1}\cdot(\boldsymbol{X}_\rho^{m+1})^-|_{\rho_{j+\frac{1}{2}}}-
\widehat{\boldsymbol{X}}^{m+1}\cdot(\boldsymbol{X}_\rho^{m+1})^+|_{\rho_{j-\frac{1}{2}}}\nonumber\\
=&\int_{I_{j}} |\boldsymbol{X}_\rho^{m+1}|^2 d \rho-\left((\boldsymbol{X}^{m+1})^-\cdot( \boldsymbol{X}_\rho^{m+1})^--\widehat{\boldsymbol{X}}^{m+1}\cdot( \boldsymbol{X}_\rho^{m+1})^-\right)|_{\rho_{j+\frac{1}{2}}}\nonumber\\
&+\left((\boldsymbol{X}^{m+1})^+\cdot( \boldsymbol{X}_\rho^{m+1})^+-\widehat{\boldsymbol{X}}^{m+1}\cdot( \boldsymbol{X}_\rho^{m+1})^+\right)|_{\rho_{j-\frac{1}{2}}}=0.\nonumber
\end{flalign}
Since the continuity of $\boldsymbol{X}^{m+1}$ ensures that its trace is single-valued across element interfaces, i.e., $(\boldsymbol{X}^{m+1})^+=(\boldsymbol{X}^{m+1})^-$. Consequently, we obtain:
$$
\int_{I_{j}} |\boldsymbol{X}_\rho^{m+1}|^2d \rho=0,
$$
which suggests $\boldsymbol{X}^{m+1}\equiv\boldsymbol{X}^c\in\mathbb{R}^2$.
Substituting this into \eqref{3.5}, we obtain for all $\eta\in V_h$ that
\[
\int_{I_j}\boldsymbol{X}^c\cdot\boldsymbol{n}^{\star m}\eta d\rho =0,\quad \forall j=1,\ldots, N,
\]
 which implies $\boldsymbol{X}^c\cdot\boldsymbol{n}^{\star m}=0$ for $j=1,\ldots,N$. Therefore, by recalling the assumption, the homogeneous problem admits only the zero solution. By linearity, this guarantees the uniqueness---and hence the well-posedness of the solution to the original inhomogeneous system \eqref{3.1}.
\end{proof}

\subsection{Extension to anisotropic area-preserving curve-shortening flow}
\label{subsec:apcsf}

In this subsection, we extend the LDG method to the anisotropic AP-CSF---a second-order geometric evolution governed by the equation
\begin{flalign}
    \partial_t\boldsymbol{X}=(-\mu+\langle\mu\rangle)\boldsymbol{n},
\end{flalign}
where $\langle\mu\rangle=\int_{\Gamma(t)}\mu ds/\int_{\Gamma(t)} ds$ is the average chemical potential.
We now outline the construction of the LDG scheme for this flow. Analogously to \eqref{2.1}, we reformulate the anisotropic AP-CSF as a system of coupled equations:
\begin{equation}
\label{3.12}
\left\{
\begin{aligned}
&\boldsymbol{n}^\star\cdot\partial_{t} \boldsymbol{X}=-Q(\mu-\langle\mu\rangle),\\
&\mu\boldsymbol{n}^\star=-\partial_{\rho}\left(G(\theta)\frac{\partial_\rho\boldsymbol{X}}{Q}\right).
\end{aligned}
\right.
\end{equation}
The corresponding LDG scheme reads as \eqref{3.1} with \eqref{3.1a} replaced by
\[\int_{I_{j}} \frac{\boldsymbol{X}^{m+1}-\boldsymbol{X}^{m}}{\tau} \cdot
\boldsymbol{n}^{\star m}\eta d \rho+\int_{I_{j}} Q^m(\mu^{m+1}-\langle\mu^{m+1}\rangle_h) \eta d \rho=0, \]
where $$\langle\mu^{m+1}\rangle_h=\sum_{j=1}^N\int_{I_j}\mu^{m+1}Q^m d\rho/\int_{I_j}Q^m d\rho.$$

\section{Numerical results \& discussions}
\label{sec:numerical}

In this section, we present a series of numerical experiments to evaluate the performance of the proposed method, focusing on spatial convergence rates, energy dissipation properties, area loss, and mesh-ratio evolution. For quantitative comparison between curves, we employ the manifold distance to measure numerical errors. Given two closed curves $\Gamma_1$ and $\Gamma_2$, the manifold distance is computed as defined in \cite{Zhao2021}
$$
M(\Gamma_1,\Gamma_2)=|(\Omega_1\setminus\Omega_2 )\cup(\Omega_2\setminus\Omega_1)|=|\Omega_1|+|\Omega_2|-2|\Omega_1\cap\Omega_2|,
$$
where $\Omega_i$ denotes the region enclosed by $\Gamma_i$ and $|\Omega|$ denotes the area of $\Omega$.

Let $\boldsymbol{X}^m = (x^m, y^m)^T$ be the numerical solution of the fully discrete scheme \eqref{3.1}. To obtain a geometric representation of $\boldsymbol{X}^m$ for visualization and distance computations, we sample each cell $I_j$($j=1,\ldots,N$) by 300 uniformly distributed points. At the cell interface $\rho_{j+\frac{1}{2}}$, we take the value as the midpoint of the left and right traces: $\frac{1}{2}\left( (\boldsymbol{X}^m)^- + (\boldsymbol{X}^m)^+ \right)|{\rho_{j+\frac{1}{2}}}$. Connecting the sampling points in sequence yields a polygon $\Gamma^m$, which we use both for visualization and for computing the manifold distance.

Denote by $A(t=t_m)$ the area enclosed by the curve related to the numerical solution $\boldsymbol{X}^m$, defined as
\begin{equation}
\label{4.1}
    A^h(t)=\sum_{j=1}^{N}\int_{I_j}x\partial_\rho yd\rho.
\end{equation}
For systematic assessment we define normalized energy, normalized area loss, and mesh ratio as follows:
\begin{equation}
\frac{W^h(t)}{W^h(0)}\biggr|_{t=t_m}=\frac{W^m}{W^0},\quad
    \Delta A^h(t)|_{t=t_m}=\frac{A^m-A^0}{A^0},\quad
    \Psi(t)|_{t=t_m}=\frac{\max\limits_{1\le j\le N}|\boldsymbol{h}_j^m|}{\min\limits_{1\le j\le N}|\boldsymbol{h}_j^m|},
\end{equation}
where $\boldsymbol{h}^m_j=(\boldsymbol{X}^m)^-|_{\rho_{j+\frac{1}{2}}}-(\boldsymbol{X}^m)^+|_{\rho_{j-\frac{1}{2}}}$, $W^m$ defined as \eqref{2.7} corresponds to the discrete energy. To examine how individual terms contribute to the total discrete energy dissipation, we furthermore split $W_c^h(t)=W^h_1(t)+W^h_2(t)$, where
$$W^h_1(t)=\int_{I}\gamma(\theta)Qd\rho,\quad
W^h_2(t)=\frac{\alpha}{2}\sum\limits_{j=1}^N|\boldsymbol{X}^+-\boldsymbol{X}^-|^2\biggr|_{\rho_{j+\frac{1}{2}}},
$$
and rewrite the normalized energy accordingly:
$$\frac{W^h_c(t)}{W^h_c(0)}\biggr|_{t=t_m}=\frac{W^h_1(t)}{W^h_c(0)}\biggr|_{t=t_m}+\frac{W^h_2(t)}{W^h_c(0)}\biggr|_{t=t_m}=\frac{W^m_1}{W^0_c}+\frac{W^m_2}{W^0_c}.$$

For the experiments below, we consider four initial curves:
\begin{itemize}
    \item (Curve 1): the unit circle;
    \item (Curve 2): an ellipse with semi-major axis 2 and semi-minor axis 1;
    \item (Curve 3): a `flower' curve parameterized by
\[
\left\{
\begin{aligned}
&x=(2+\cos(6\delta))\cos\delta,\\
&y=(2+\cos(6\delta))\sin\delta,
\end{aligned}
\right.\quad \delta=2\pi\rho,\quad \rho\in I;
\]
\item (Curve 4): the so-called `Mikula' curve \cite{Sevcovic} parameterized by
\[
\left\{
\begin{aligned}
&x=2\cos\delta,\\
&y=2\Big(0.7\sin\delta+\sin(\cos\delta)\\
&\qquad+\frac14\big(1-\cos(2\delta)+
\frac12\cos(4\delta)-\cos(6\delta)+\frac12\cos(8\delta)\big)\Big),
\end{aligned}
\right.
\]
where $\delta=2\pi\rho$, $\rho\in I$.
\end{itemize}

\subsection{Isotropic case}
In this part, we consider the isotropic case $\gamma(\theta)=1$ with $\beta=0$. The equilibrium state is given by the circle:
\[
x(\rho)=r\cos(2\pi \rho),\quad
        y(\rho)=r\sin(2\pi \rho),  \quad \rho\in[0,1],
\]
where the radius $r$ is chosen so that the area of an equilibrium circle equals that of the initial curve.
\begin{example}[Curve evolution driven by the CSF and AP-CSF]
We examine the evolution of the elliptical initial curve (Curve 2) under two dynamics: the standard CSF and AP-CSF. Computations use $N=80$ cells, time step $\tau=10^{-3}$, penalty coefficient $\alpha=1/h$ (unless stated otherwise), and polynomial degree $k=1$.
\end{example}

\begin{figure}[htbp!]
	\centering
    \subfigure[\empty]{ 
\includegraphics[trim=0 0 55 0,clip,width=0.30\textwidth]
{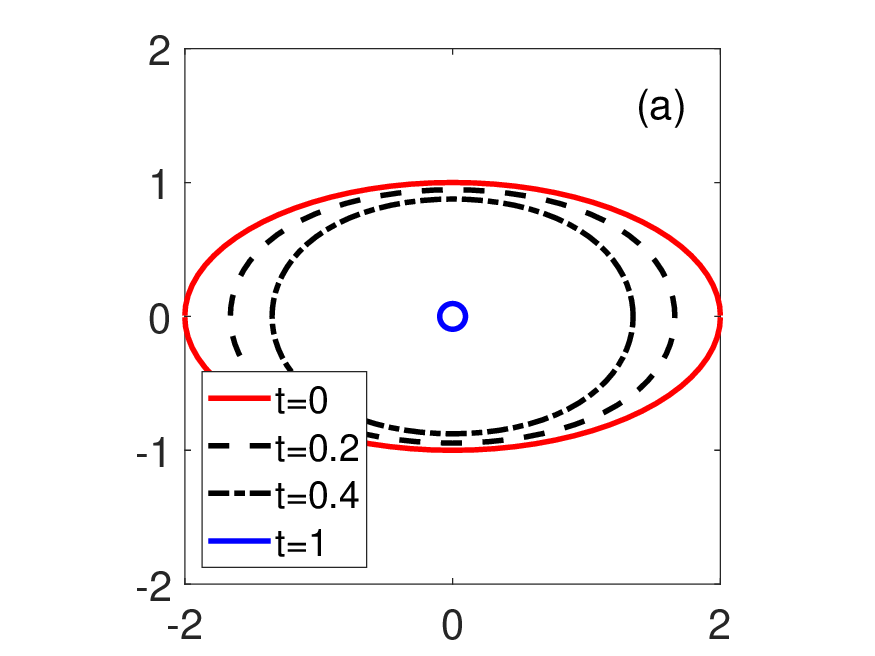}}
\subfigure[\empty]{ 
\includegraphics[trim=0 0 28 0,clip,width=0.30\textwidth,height=0.26\textwidth]
{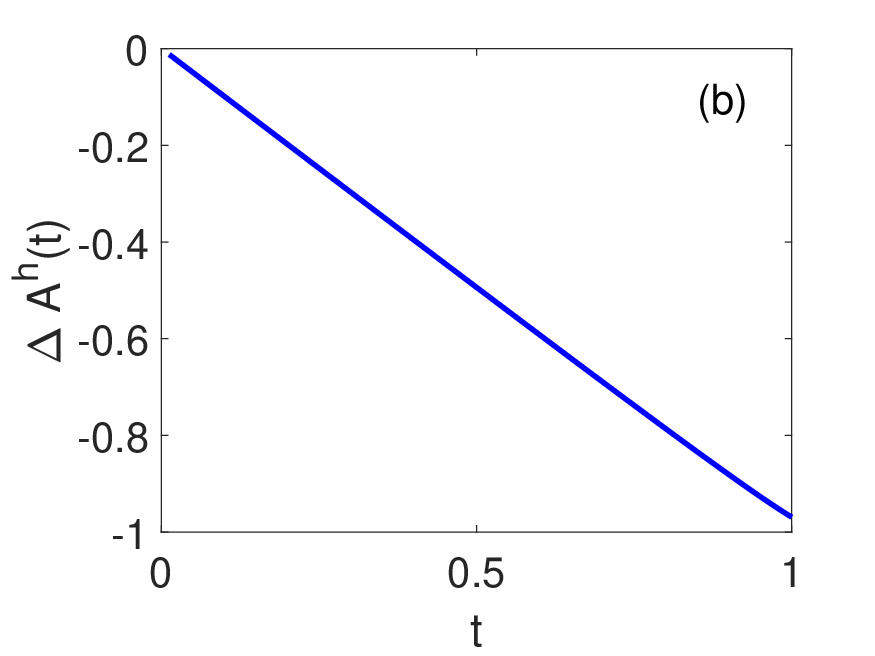}}
\subfigure[\empty]{ 
\includegraphics[trim=0 0 28 0,clip,width=0.30\textwidth,height=0.26\textwidth]
{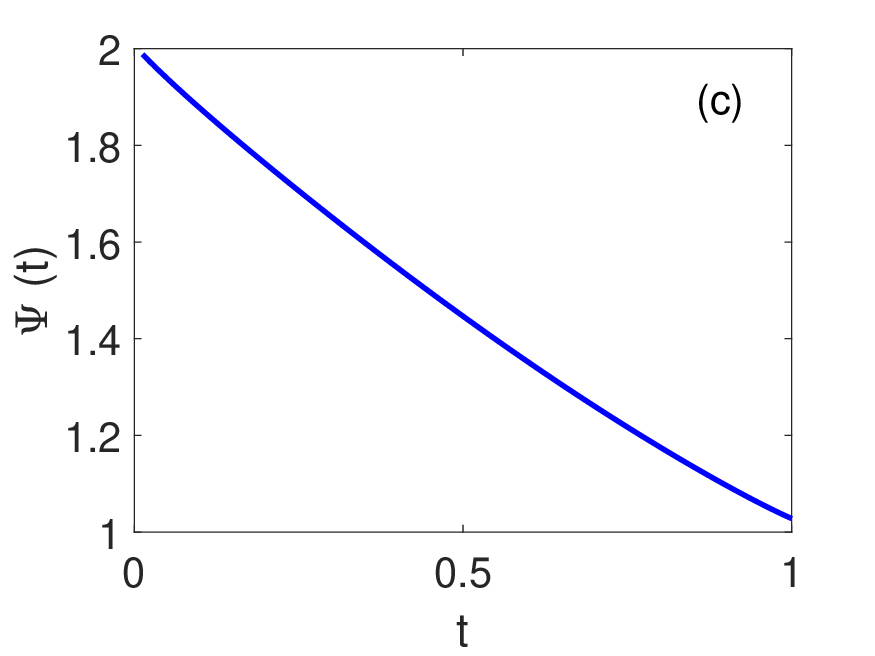}}\\\vspace{-3mm}
\subfigure[\empty]{ 
\includegraphics[trim=0 0 28 0,clip,width=0.30\textwidth,height=0.26\textwidth]
{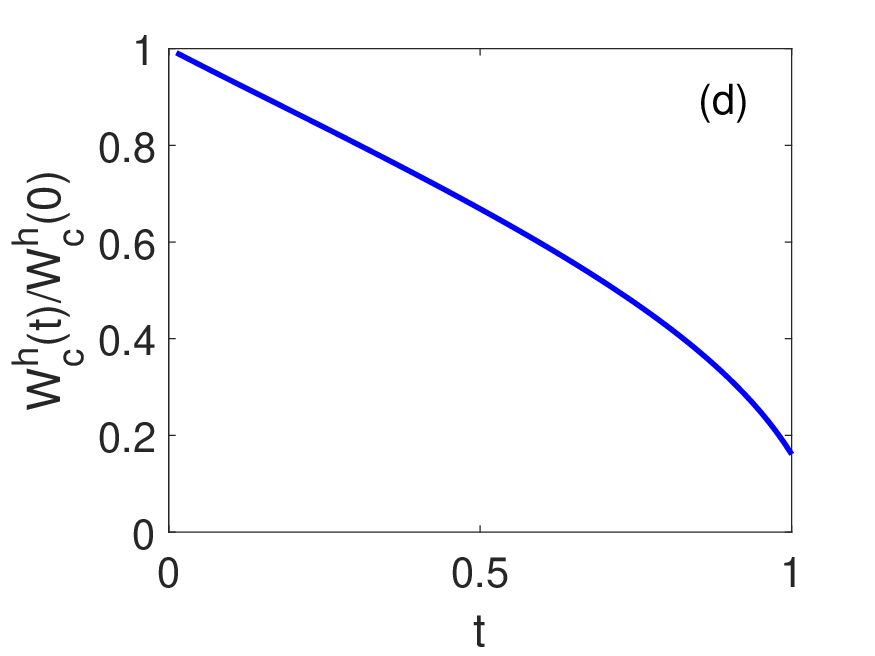}}
\subfigure[\empty]{ 
\includegraphics[trim=0 0 28 0,clip,width=0.30\textwidth,height=0.26\textwidth]
{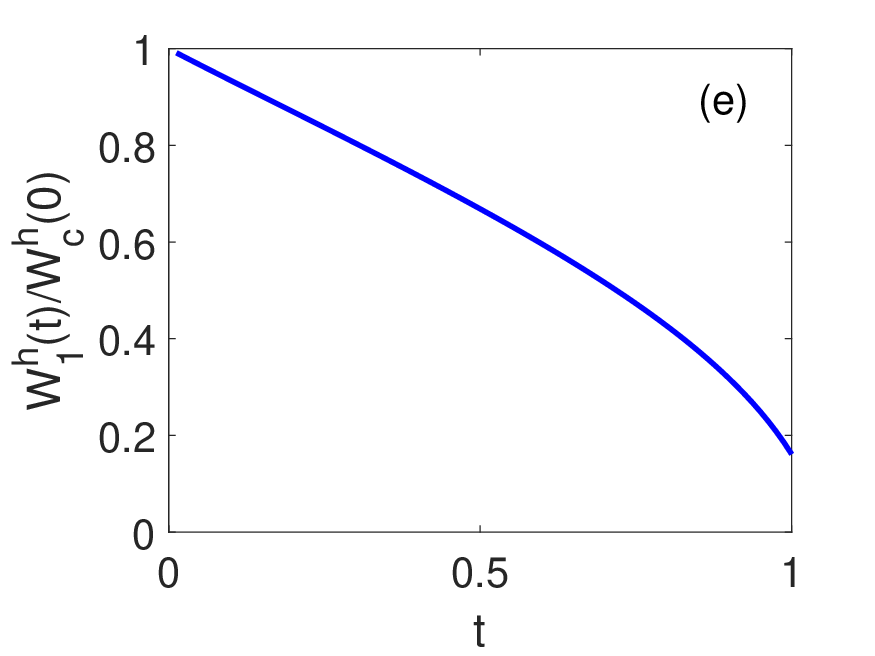}}
\subfigure[\empty]{ 
\includegraphics[trim=0 0 28 0,clip,width=0.30\textwidth,height=0.26\textwidth]
{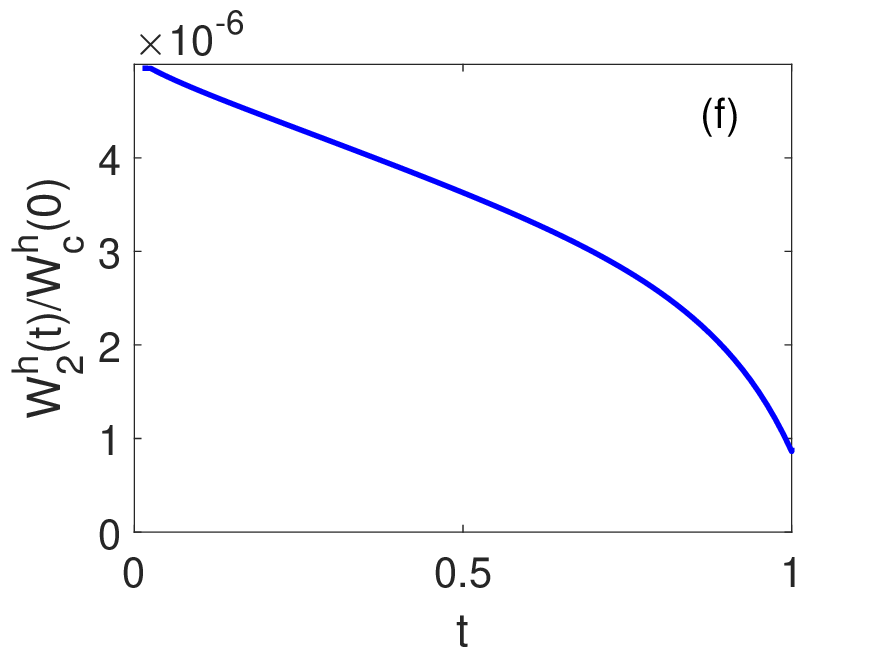}}\vspace{-3mm}
	\caption{Numerical simulation of the CSF starting from an ellipse. Quantitative diagnostics: (a) evolution; (b) relative area loss $\Delta A^h(t)$; (c) mesh ratio $\Psi(t)$; (d) normalized total energy $W^h_c(t)/W_c^h(0)$; (e) normalized energy component $W_1^h(t)/W_c^h(0)$; (f) normalized energy component $W_2^h(t)/W_c^h(0)$.}
\label{fig1}
\end{figure}

\begin{figure}[htbp!]
	\centering
    \subfigure[\empty]{ 
\includegraphics[trim=0 0 55 0,clip,width=0.30\textwidth,height=0.25\textwidth]
{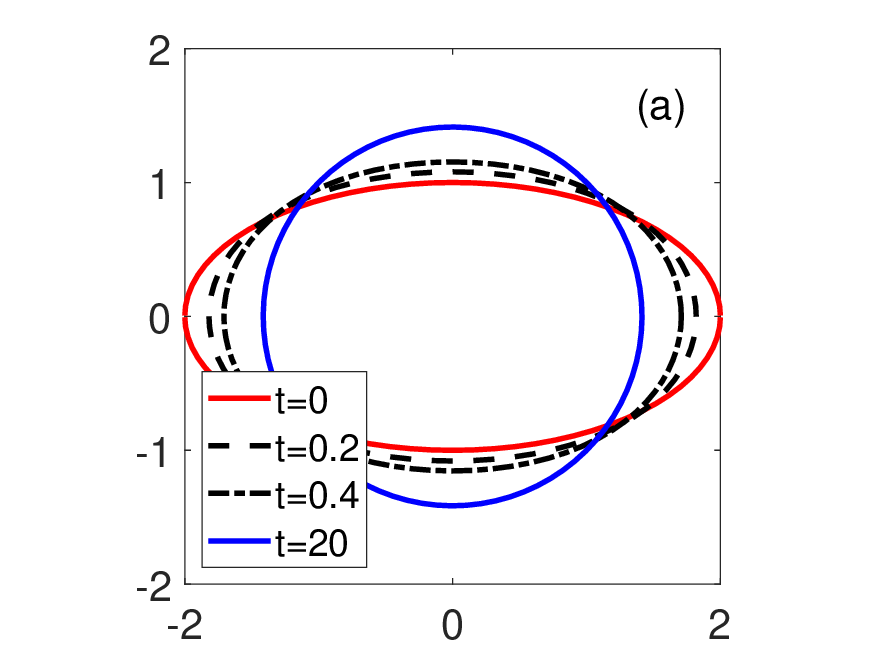}}
\subfigure[\empty]{ 
\includegraphics[trim=0 0 28 0,clip,width=0.30\textwidth,height=0.25\textwidth]
{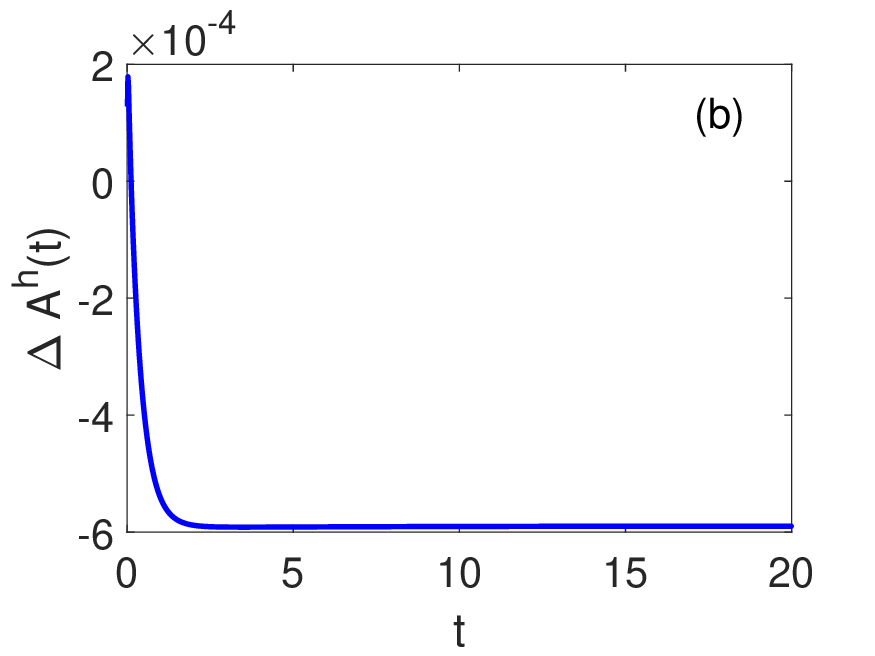}}
\subfigure[\empty]{ 
\includegraphics[trim=0 0 28 0,clip,width=0.30\textwidth,height=0.25\textwidth]
{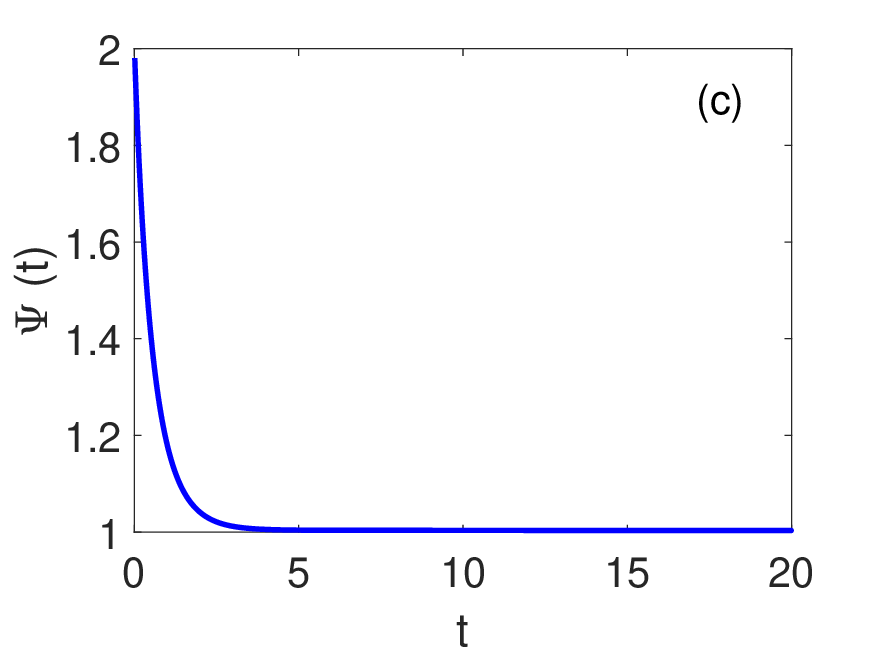}}\\\vspace{-3mm}
\subfigure[\empty]{ 
\includegraphics[trim=0 0 28 0,clip,width=0.30\textwidth,height=0.25\textwidth]
{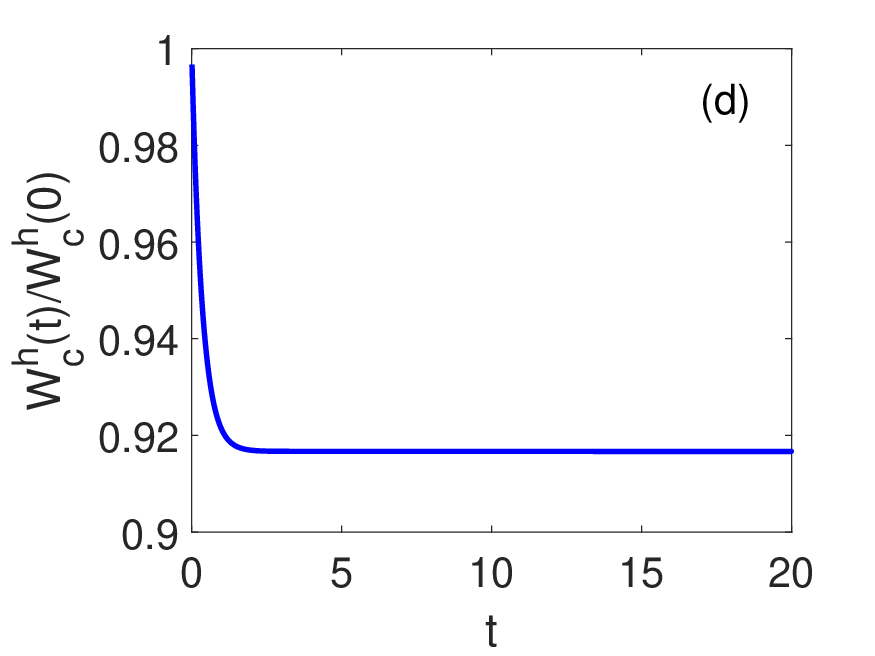}}
\subfigure[\empty]{ 
\includegraphics[trim=0 0 28 0,clip,width=0.30\textwidth,height=0.25\textwidth]
{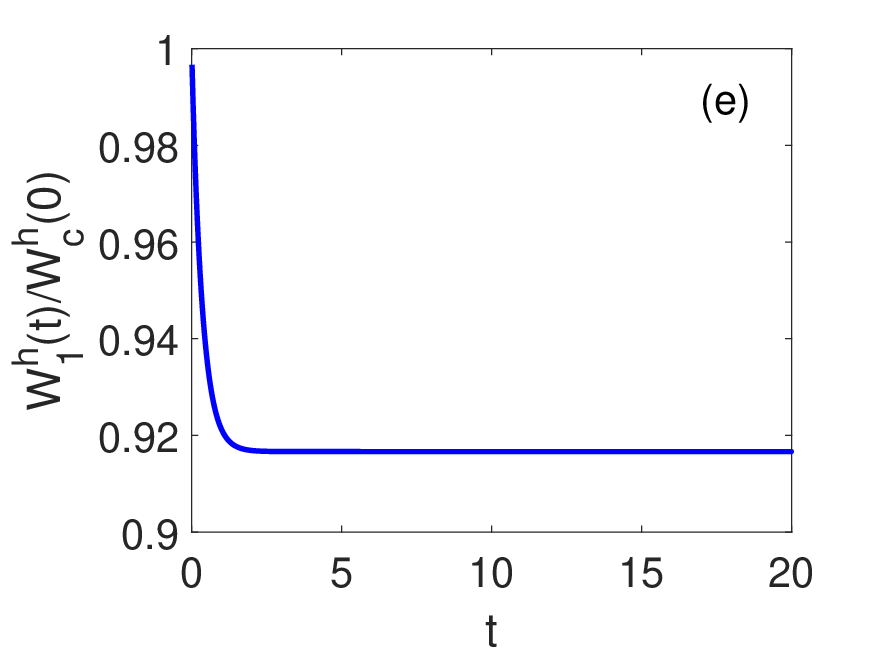}}
\subfigure[\empty]{ 
\includegraphics[trim=0 0 28 0,clip,width=0.30\textwidth,height=0.25\textwidth]
{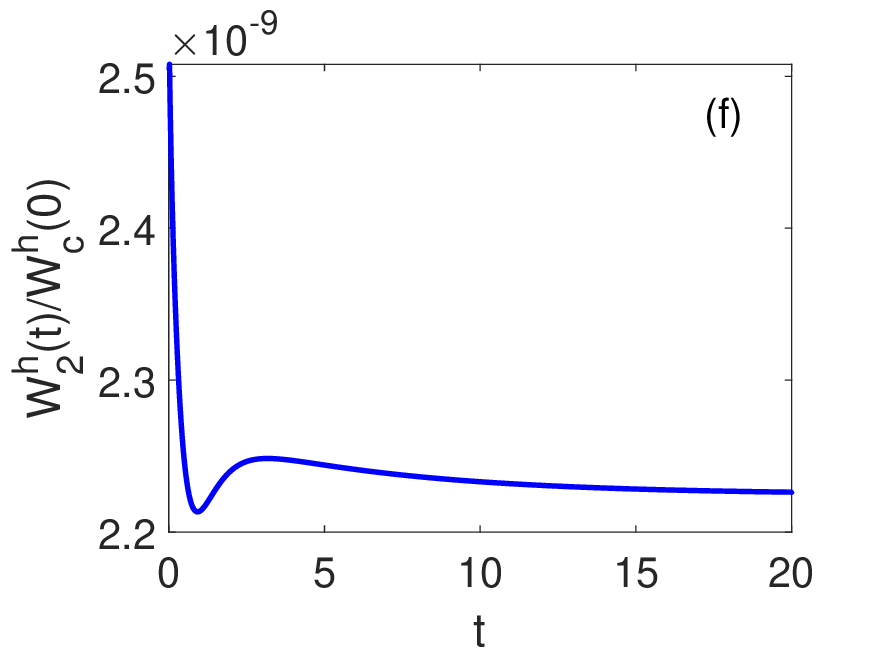}}\vspace{-3mm}
	\caption{Numerical simulation of the AP-CSF starting from an ellipse with penalty factor $\alpha=1/h$. Quantitative diagnostics: (a) evolution; (b) relative area loss $\Delta A^h(t)$; (c) mesh ratio $\Psi(t)$; (d) normalized total energy $W^h_c(t)/W_c^h(0)$; (e) normalized energy component $W_1^h(t)/W_c^h(0)$; (f) component $W_2^h(t)/W_c^h(0)$.}
\label{fig2}
\end{figure}

Figs. \ref{fig1}-\ref{fig2} display the evolution of the ellipse under the two flows along with time histories of three key geometric quantities: (i) normalized energy, (ii) relative area loss, and (iii) mesh ratio.
For the AP-CSF, the relative area loss stabilizes at approximately $6\times 10^{-4}$ (Fig. \ref{fig2}(b)), and the mesh ratio remains well-behaved throughout the simulation (Fig. \ref{fig2}(c)). Panels \ref{fig1}(d)-\ref{fig2}(d) confirm that the LDG discretization preserves the expected perimeter-decreasing property for both flows. The component energies $W_1^h(t)/W_c^h(0)$ and $W_2^h(t)/W_c^h(0)$ are plotted in Figs. \ref{fig1}(e)-\ref{fig2}(e) and Figs. \ref{fig1}(f)-\ref{fig2}(f), respectively. Notably, $W_2^h$ is negligible---at the order of $10^{-9}$---in Fig. \ref{fig2}(f), so $W_c^h$ and $W_1^h$ differ only slightly, in agreement with the energy decomposition presented in \cite{Li2021}.

\begin{figure}[htbp!]\hspace{-8mm}
    \subfigure[\empty]{ 
\includegraphics[trim=0 0 50 0,clip,width=0.29\textwidth,height=0.24\textwidth]
{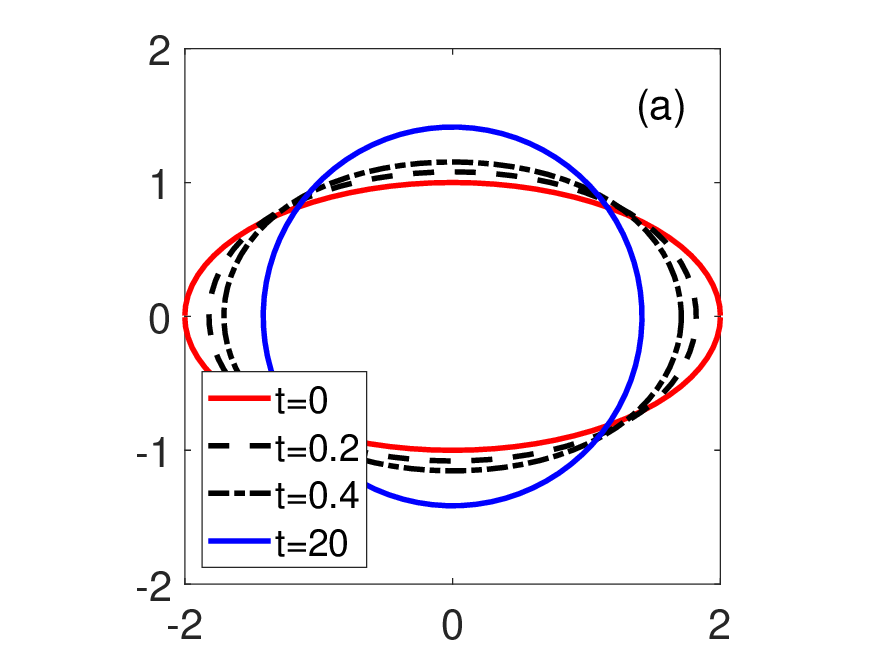}}\hspace{-4mm}
\subfigure[\empty]{ 
\includegraphics[trim=0 0 28 0,clip,width=0.25\textwidth,height=0.24\textwidth]
{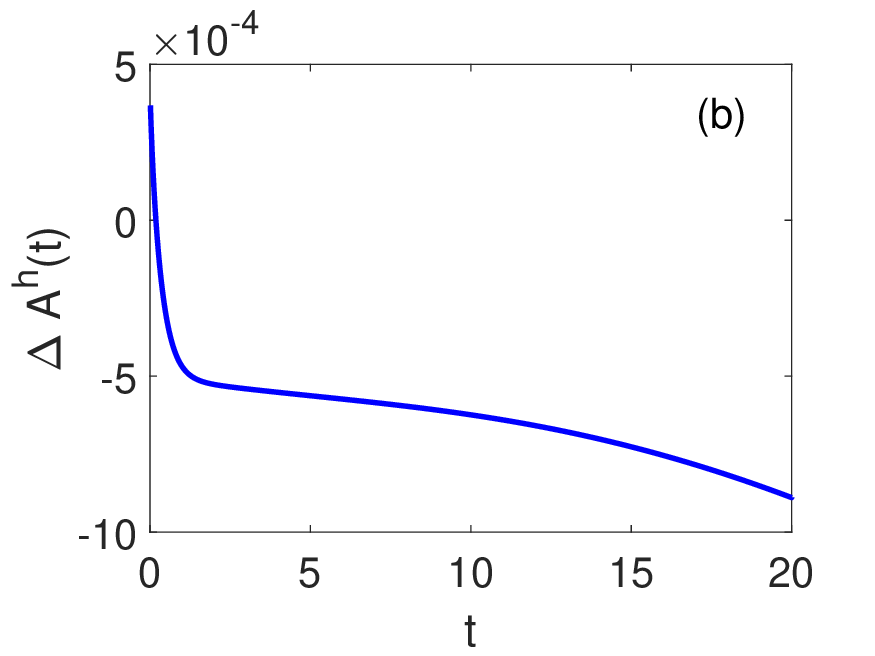}}\hspace{-2mm}
\subfigure[\empty]{ 
\includegraphics[trim=0 0 28 0,clip,width=0.25\textwidth,height=0.24\textwidth]
{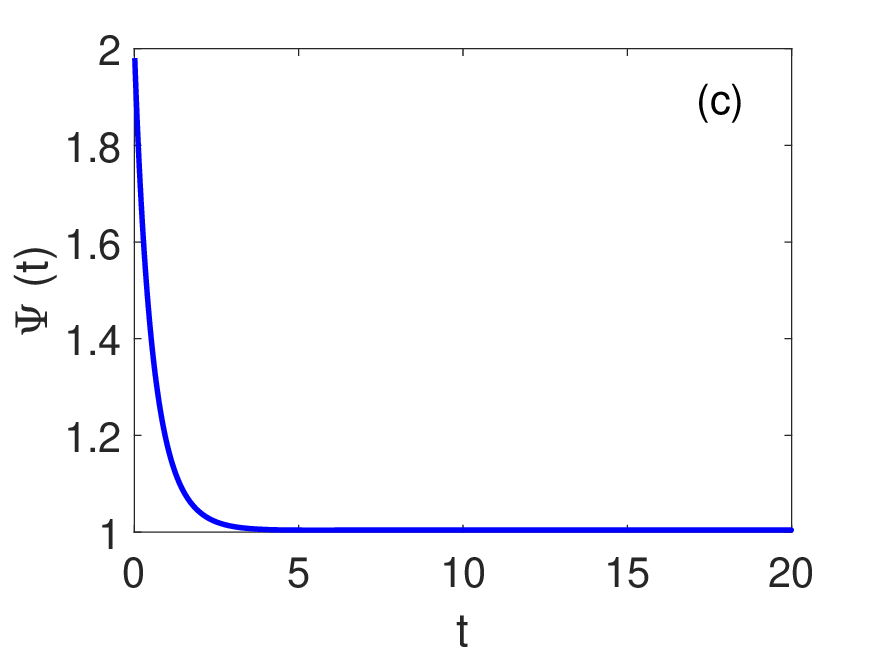}}\hspace{-2mm}
\subfigure[\empty]{ 
\includegraphics[trim=0 0 28 0,clip,width=0.25\textwidth,height=0.24\textwidth]
{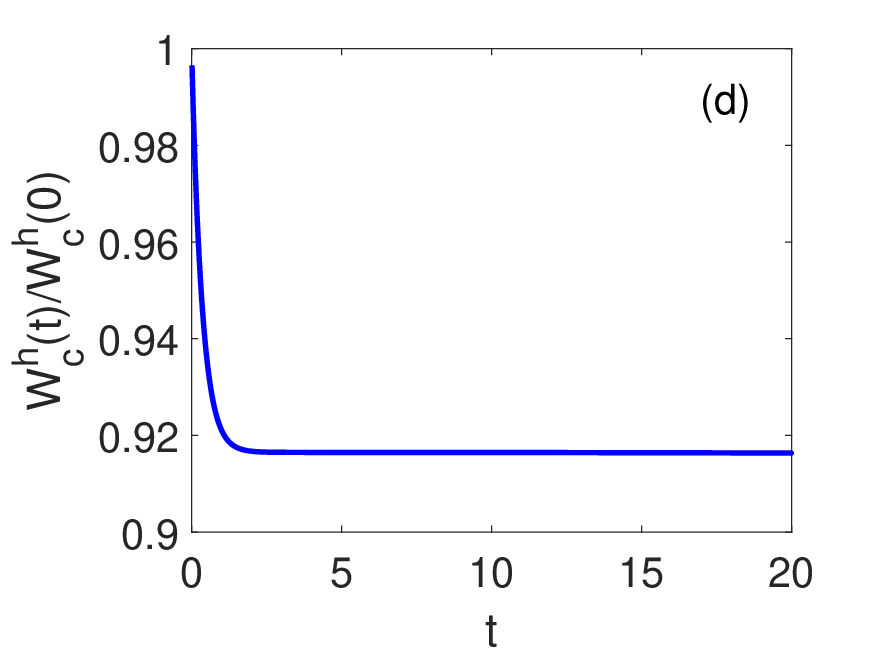}}\vspace{-3mm}
	\caption{Numerical simulation of the AP-CSF starting from an ellipse with $\alpha=0$: (a) geometric evolution; (b) normalized relative area loss $\Delta A^h(t)$; (c) mesh ratio $\Psi(t)$; (d) normalized total energy $W^h_c(t)/W_c^h(0)$.}
\label{fig3}
\end{figure}

\begin{figure}[htbp!]
	\centering
    \subfigure[\empty]{ 
\includegraphics[trim=0 0 50 0,clip,width=0.30\textwidth,height=0.25\textwidth]
{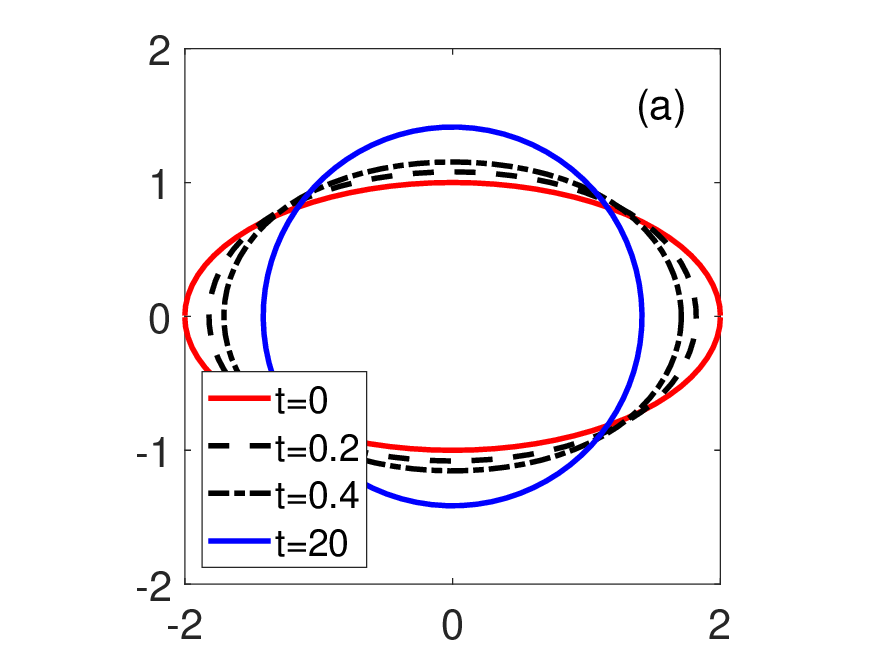}}
\subfigure[\empty]{ 
\includegraphics[trim=0 0 28 0,clip,width=0.30\textwidth,height=0.25\textwidth]
{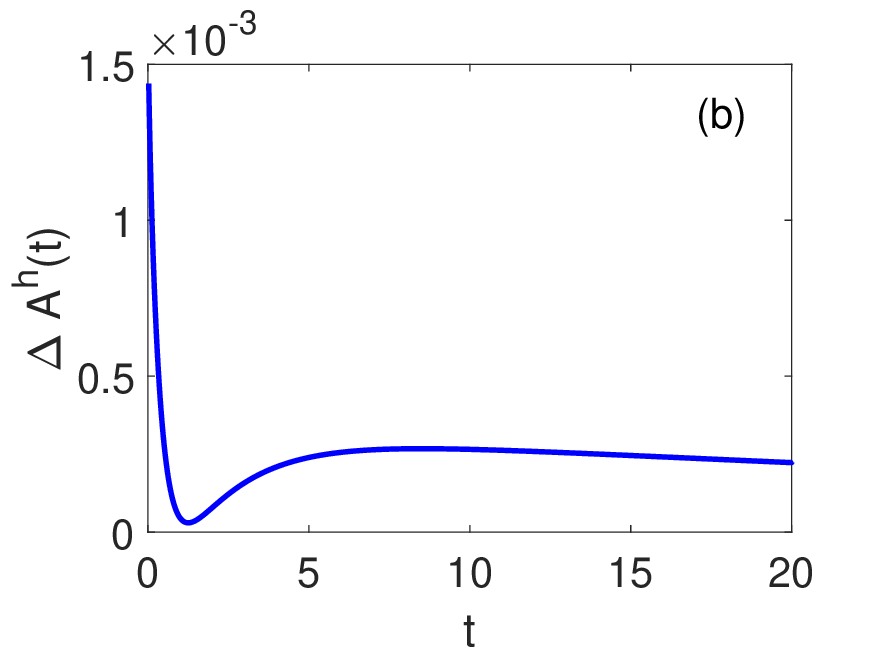}}
\subfigure[\empty]{ 
\includegraphics[trim=0 0 28 0,clip,width=0.30\textwidth,height=0.25\textwidth]
{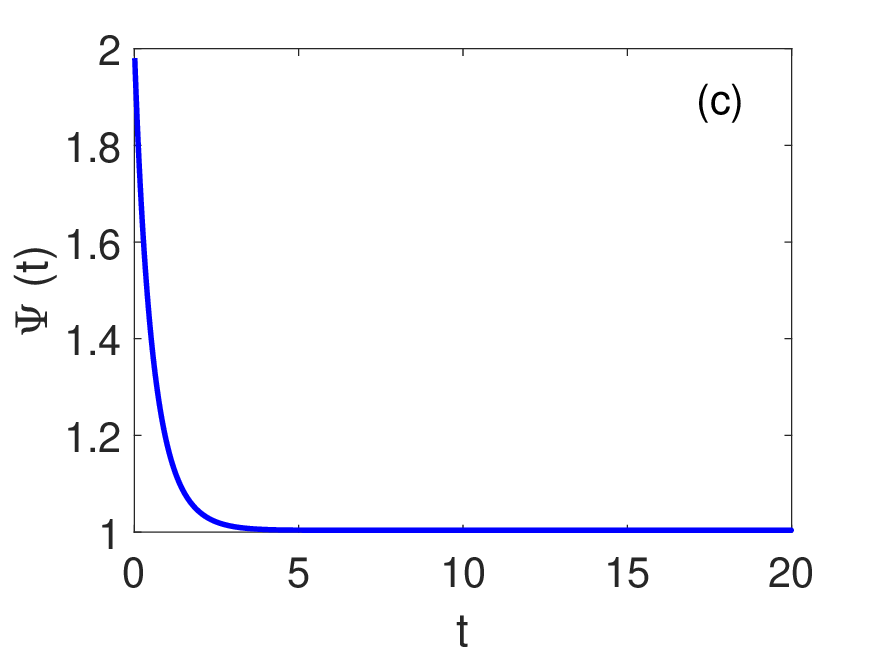}}\\\vspace{-3mm}
\subfigure[\empty]{ 
\includegraphics[trim=0 0 28 0,clip,width=0.30\textwidth,height=0.25\textwidth]
{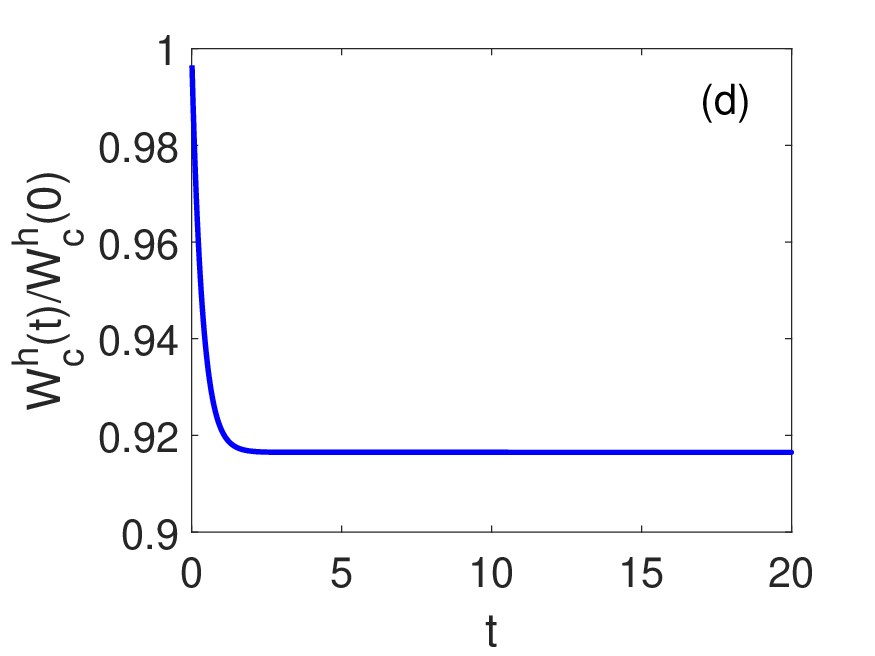}}
\subfigure[\empty]{ 
\includegraphics[trim=0 0 28 0,clip,width=0.30\textwidth,height=0.25\textwidth]
{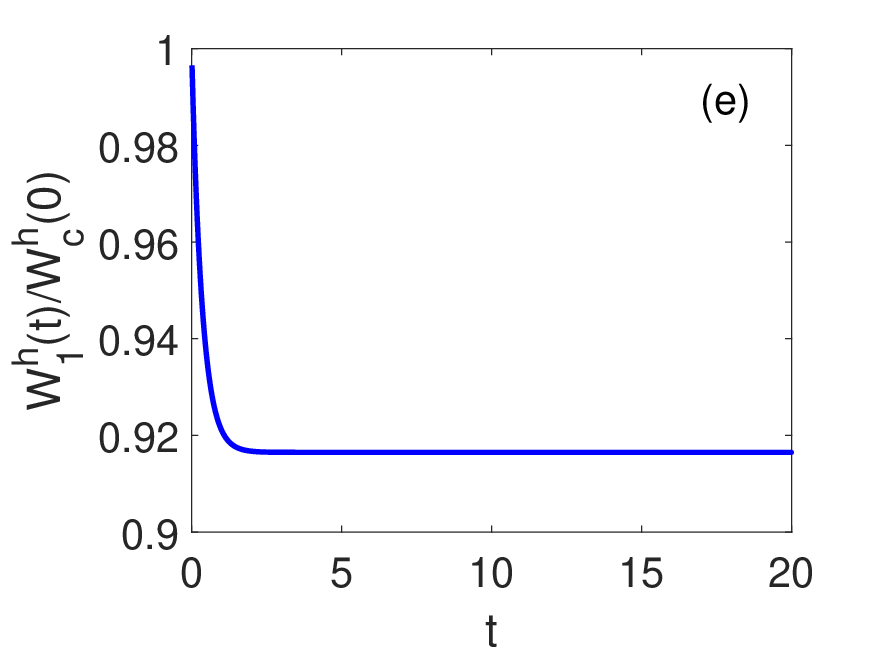}}
\subfigure[\empty]{ 
\includegraphics[trim=0 0 28 0,clip,width=0.30\textwidth,height=0.25\textwidth]
{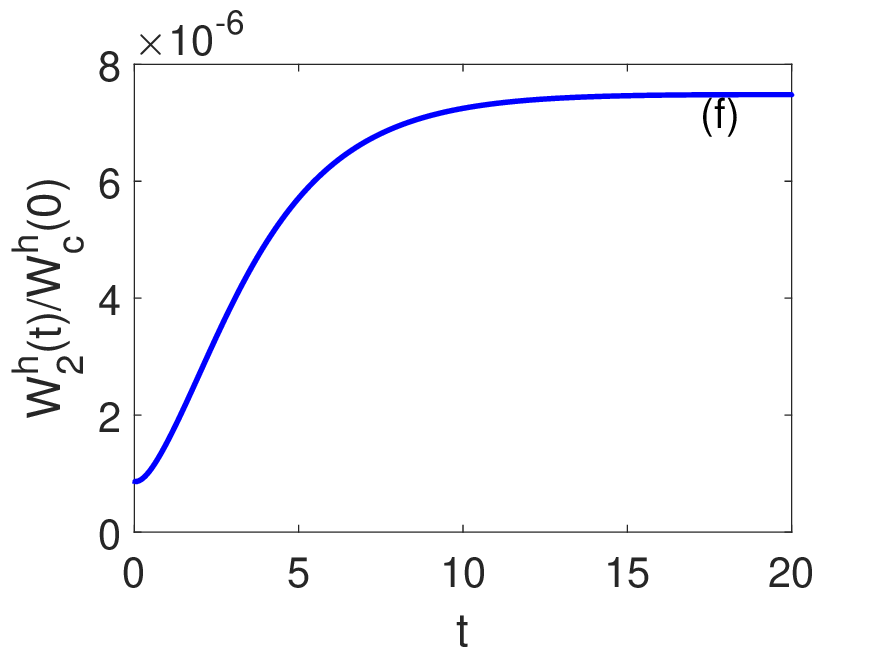}}\vspace{-3mm}
	\caption{Numerical simulation of the AP-CSF starting from an ellipse with $\alpha=0.1$: (a) evolution; (b) relative area loss $\Delta A^h(t)$; (c) mesh ratio $\Psi(t)$; (d) normalized total energy $W^h_c(t)/W_c^h(0)$; (e) normalized energy component $W_1^h(t)/W_c^h(0)$; (f) normalized energy component $W_2^h(t)/W_c^h(0)$.}
\label{fig4}
\end{figure}

\begin{figure}[htbp!]
	\centering
\includegraphics[trim=0 0 0 0,clip,width=0.9\textwidth]
{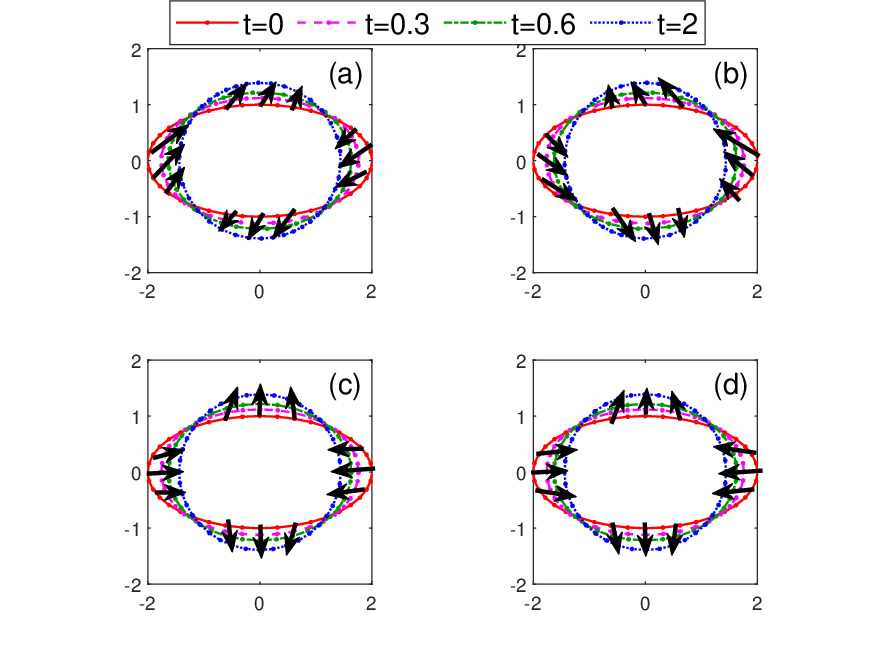}\vspace{-6mm}
	\caption{Tangential velocity on the initial elliptical curve under the AP-CSF for different numerical fluxes: (a) $\bm{\xi}=\bm{\xi}^+$; (b) $\bm{\xi}=\bm{\xi}^-$; (c) $\bm{\xi}=\bm{\xi}^++\alpha(\boldsymbol{X}^+-\boldsymbol{X}^-)$; (d) $\bm{\xi}=\bm{\xi}^-+\alpha(\boldsymbol{X}^+-\boldsymbol{X}^-)$.}
\label{fig5}
\end{figure}

To assess the influence of the penalty term, we compare simulations with $\alpha=0$, $\alpha=0.1$, and $\alpha=1/h$. In the absence of a penalty ($\alpha=0$), the relative area loss grows over time and fails to remain stable in long-time simulations (Fig. \ref{fig3}(b)). Introducing a small penalty ($\alpha=0.1$) improves performance relative to the unpenalized case but still does not achieve long-term stability (Fig. \ref{fig4}(b)). These findings motivate our choice of $\alpha =1/h$ for the results reported above.
Further insight is gained from the tangential velocity distributions shown in Fig. \ref{fig5}. Without the penalty (Figs. \ref{fig5}(a)-\ref{fig5}(b)), the curve develops a directional rotation that depends on whether $\bm{\xi}^+$ or $\bm{\xi}^-$ is used. In contrast, when the penalty is active (Figs. \ref{fig5}(c)-\ref{fig5}(d)), the tangential velocity becomes nearly uniform.  This uniformity promotes mesh equidistribution and explains the improved mesh-ratio behavior observed in the penalized simulations.

\begin{example}[Convergence tests]
To validate the convergence properties of the proposed LDG scheme, we systematically examine spatial convergence rates for both the CSF and AP-CSF using polynomial approximations of degree $k$ ($k=1,2,3,4$). The temporal step size is set as $\tau = C h^{k+1}$, where $C$ is a positive constant.
\end{example}

\begin{table}[htbp!]
	\centering
	\caption{Convergence of the manifold distance for the CSF. Computations use $P^k$ finite elements in space ($k = 1, 2, 3,4$) where the initial curve is chosen as Curve 1. Time step and final time: $\tau=5h^{k+1}$, $T=0.25$.}
	\resizebox{\textwidth}{!}{%
		\begin{tabular}{llll @{\hspace{0.8em}} lll@{\hspace{0.8em}}lll@{\hspace{0.8em}}lll}
		\toprule
		   &$P^1$&&&$P^2$&&&$P^3$&&&$P^4$&&\\
		\cmidrule(r){2-4} \cmidrule(r){5-7} \cmidrule(r){8-10} \cmidrule(r){11-13}
		&$N$   & error& order&$N$   & error& order&$N$   & error& order&$N$   & error& order\\
		\bottomrule
		$\alpha=1/h$&5 & 1.73e-01  &-      &5 & 2.32e-02  &-    &5 & 2.30e-02  &-    & 5 & 5.17e-03  &-\\
		&10& 4.37e-02  & 1.98 & 10& 3.21e-03  &2.85 &10& 1.61e-03  &3.83 & 10& 1.62e-04  &4.99\\
		&20& 1.10e-02  &1.97  & 20& 4.06e-04  &2.98 &20& 9.94e-05  &4.02 & 15& 2.10e-05  &5.03\\
		&40& 2.78e-03 & 1.99  & 40& 5.08e-05  &2.99 &40& 5.74e-06  &4.11 & 20& 4.83e-06  &5.10\\
		\\
		$\alpha=0.1$&5 & 2.70e-01  &-      &5 & 1.06e-01  &-    &5 & 2.50e-02  &-    & 5 & 5.17e-03  &-\\
		&10& 1.18e-01  & 1.19 & 10& 1.53e-02  &2.79 &10& 1.61e-03  &3.95 & 10& 1.62e-04  &4.99\\
		&20& 3.41e-02  &1.79  & 20& 1.94e-03  &2.98 &20& 9.94e-05  &4.02 & 15& 2.10e-05  &5.03\\
		&40& 8.93e-03 & 1.93  & 40& 2.44e-04  &2.99 &40& 5.74e-06  &4.11 & 20& 4.83e-06  &5.10\\
		\\
		$\alpha=0$&5 & 2.66e-01  &-      &5 & 1.06e-01  &-    &5 & 2.50e-02  &-    & 5 & 5.17e-03  &-\\
		&10& 1.18e-01  & 1.17 & 10& 1.53e-02  &2.79 &10& 1.61e-03  &3.95 & 10& 1.62e-04  &4.99\\
		&20& 3.40e-02  &1.79  & 20& 1.94e-03  &2.98 &20& 9.94e-05  &4.02 & 15& 2.10e-05  &5.03\\
		&40& 8.90e-02 & 1.93  & 40& 2.44e-04  &2.99 &40& 5.74e-06  &4.11 & 20& 4.83e-06  &5.10\\
		\bottomrule
	\end{tabular}}
	\label{tab1}
\end{table}

\begin{table}[htbp!]
	\centering
	\caption{Convergence of the manifold distance for the AP-CSF, using $P^k$ elements in space, where the initial curve is chosen as Curve 2. Time step and final time: $\tau=5h^{k+1}$, $T=0.25$.}
    		\resizebox{\textwidth}{!}{%
            \begin{tabular}{llll @{\hspace{0.8em}} lll@{\hspace{0.8em}}lll@{\hspace{0.8em}}lll}
		\toprule
           &$P^1$&&&$P^2$&&&$P^3$&&&$P^4$&&\\
            \cmidrule(r){2-4} \cmidrule(r){5-7} \cmidrule(r){8-10}\cmidrule(r){11-13}
		&$N$   & error& order&$N$   & error& order&$N$   & error& order&$N$   & error& order\\
            \bottomrule
 		$\alpha=1/h$&5 & 7.00e-01  &-      &5 & 3.63e-02  &-    &5 & 7.71e-03  &-    & 5 & 3.00e-03  &-\\
		  &10& 1.63e-01  & 2.10 & 10& 4.55e-03  &2.99 &10& 4.45e-04  &4.11 & 10& 8.67e-05  &5.11\\
        &20& 4.00e-02  &2.02  & 20& 6.26e-04  &2.85 &20& 2.43e-05  &4.19 & 15& 1.06e-05  &5.17\\
		  &40& 9.98e-03 & 2.00  & 40& 7.41e-05  &3.07 &40& 1.46e-06  &4.05 & 20& 2.42e-06  &5.13\\
           \\
          $\alpha=0.1$&5 & 7.34e-01  &-      &5 & 4.79e-02  &-    &5 & 2.28e-02  &-    & 5 & 5.78e-03  &-\\
		  &10& 2.20e-01  & 1.73 & 10& 9.48e-03  &2.33 &10& 1.78e-03  &3.67 & 10& 7.23e-04  &3.00\\
        &20& 5.72e-02  &1.94  & 20& 1.29e-03  &2.86 &20& 1.93e-04  &3.20 & 15& 1.09e-04  &4.65\\
		  &40& 1.44e-02 & 1.98  & 40& 1.80e-04  &2.84 &40& 1.82e-05  &3.41 & 20& 2.75e-05  &4.79\\
          \\
          $\alpha=0$&5 & 7.38e-01  &-      &5 & 4.94e-02  &-    &5 & 2.41e-02  &-    & 5 &  6.74e-03 &-\\
		  &10& 2.24e-01  & 1.71 & 10& 1.03e-02  &2.25 &10&   2.32e-03&3.37 & 10&  7.88e-04 &3.09\\
        &20& 5.81e-02  &1.95  & 20& 1.53e-03  &2.75 &20&   2.43e-04&3.25 & 15& 9.53e-05  &5.20\\
		  &40& 1.53e-02 & 1.92  & 40& 2.39e-04  &2.67 &40&   1.80e-05&3.75 & 20& 1.68e-05  &6.01\\
		\bottomrule
	\end{tabular}}
    \label{tab2}
\end{table}

For the isotropic CSF with Curve 1 as initial data, an exact solution is available:
\[
    \boldsymbol{X}_{\text{true}}(\rho,t)=\sqrt{1-2t}(\cos(2\pi\rho),\,\sin(2\pi\rho)),\quad \rho\in I,\quad
    t\in[0,0.5).
\]
For this case, we compute numerical errors by direct comparison with the true solution. In cases where the exact solution is unavailable, we use reference solutions obtained with the LDG method using $P^4$ element on a refined mesh ($N=10000$) and a tiny time step ($\tau=10^{-6}$).

We test convergence for both flows using the initials Curve 1 and Curve 2. Tables \ref{tab1}-\ref{tab2} report the manifold distance and estimated convergence rates at the final time $T = 0.25$ for the CSF and AP-CSF, respectively. When the penalty parameter is chosen as $\alpha=1/h$, the scheme attains the optimal convergence rate of order $k+1$ for $P^k$ elements. In contrast, setting $\alpha=0$ or $\alpha=0.1$ still yields a convergent method, but with notably reduced overall accuracy---particularly evident in the case of AP-CSF (cf. Table \ref{tab2}). These findings further underscore the critical importance of incorporating a properly scaled penalty term to preserve the optimal accuracy of the proposed LDG schemes.

\subsection{Anisotropic case}
In the following examples, we investigate the evolution of curves with different initial configurations (Curves 1-4) under varying degrees of anisotropic strength. The theoretical equilibriums are given by the well-known Wulff construction---commonly referred to as Wulff shapes. More precisely, if $\gamma(\theta)\in C^1([-\pi,\pi])$, the Wulff envelope can be written as in \cite{Bao2017,Peng1998}:
\[
    \left\{
    \begin{aligned}
        &x(\theta)=-\gamma(\theta)\sin\theta-\gamma'(\theta)\cos\theta,\nonumber\\
        &y(\theta)=\gamma(\theta)\cos\theta-\gamma'(\theta)\sin\theta,\nonumber\\
    \end{aligned}
    \quad \theta\in[-\pi,\pi].
    \right.
\]
In the strongly anisotropic case, the Wulff envelope features prominent ``ears''.
Removing these ears yields the true equilibrium---the Wulff shape. For comparison purposes, we rescale each Wulff shape so that its area matches that of the corresponding initial curve.

\begin{example}[Curve evolution under the anisotropic CSF and AP-CSF]
We examine the evolution of elliptical curves under both the CSF and AP-CSF, considering various anisotropic surface energy densities. The numerical simulations employ $N=80$ spatial cells, a time step of $\tau=10^{-3}$, a penalty parameter $\alpha=1/h$, and piecewise linear LDG (polynomial degree $k=1$).
\end{example}

\begin{figure}[h!]
	\centering
    \subfigure[\empty]{ 
\includegraphics[trim=0 0 55 0,clip,width=0.30\textwidth,height=0.22\textwidth]
{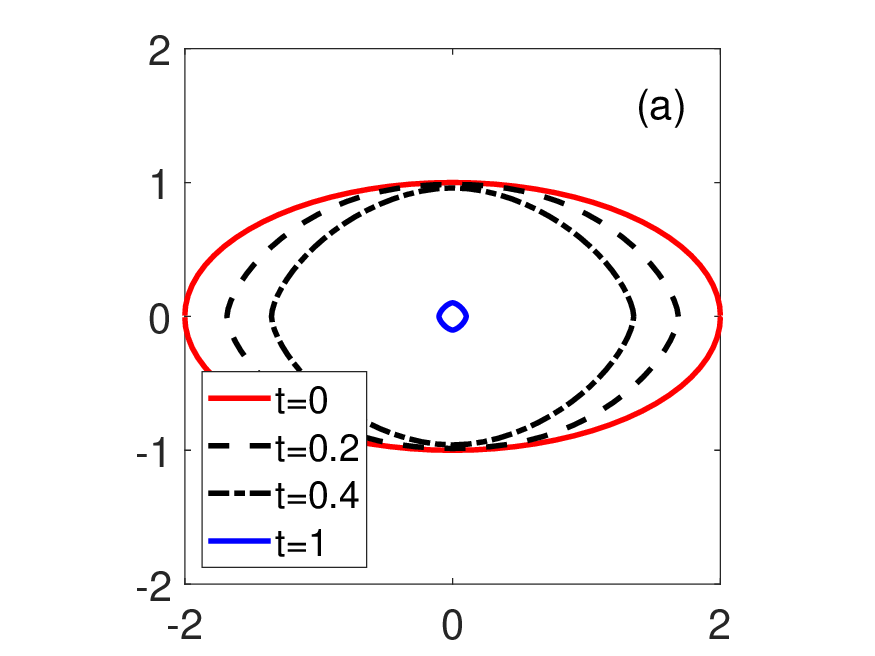}}
\subfigure[\empty]{ 
\includegraphics[trim=0 0 28 0,clip,width=0.30\textwidth,height=0.22\textwidth]
{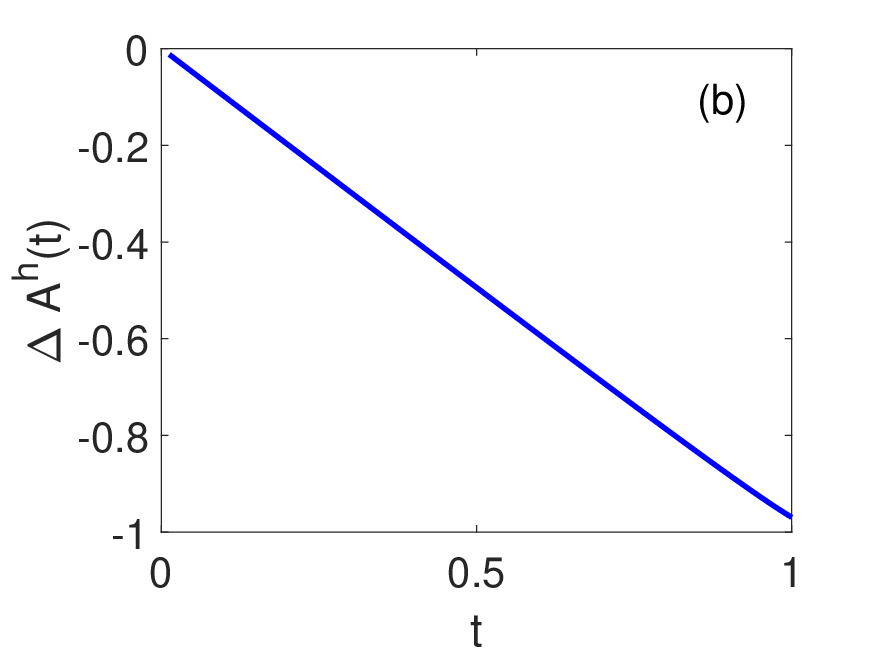}}
\subfigure[\empty]{ 
\includegraphics[trim=0 0 28 0,clip,width=0.30\textwidth,height=0.22\textwidth]
{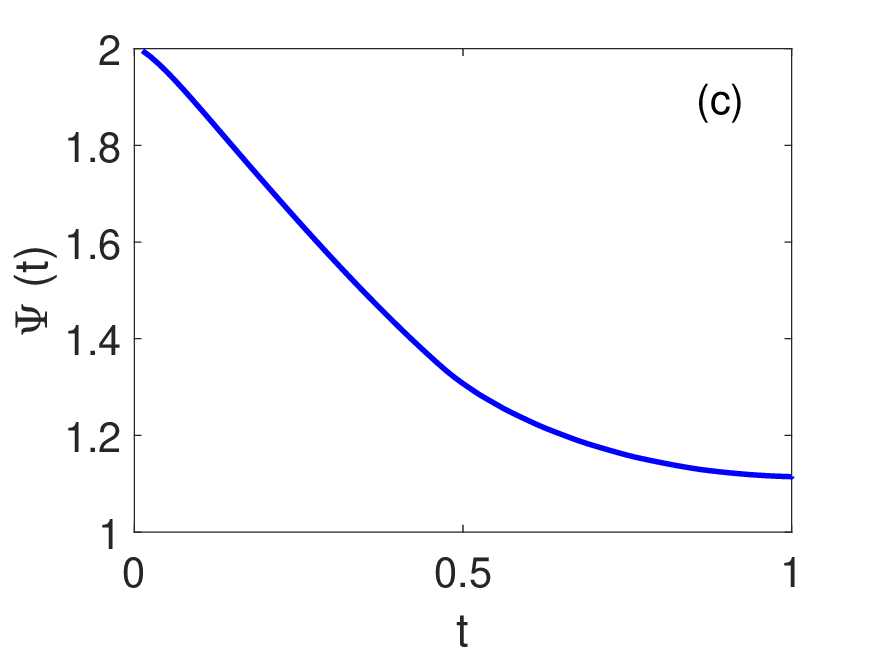}}\\\vspace{-4mm}
\subfigure[\empty]{ 
\includegraphics[trim=0 0 28 0,clip,width=0.30\textwidth,height=0.22\textwidth]
{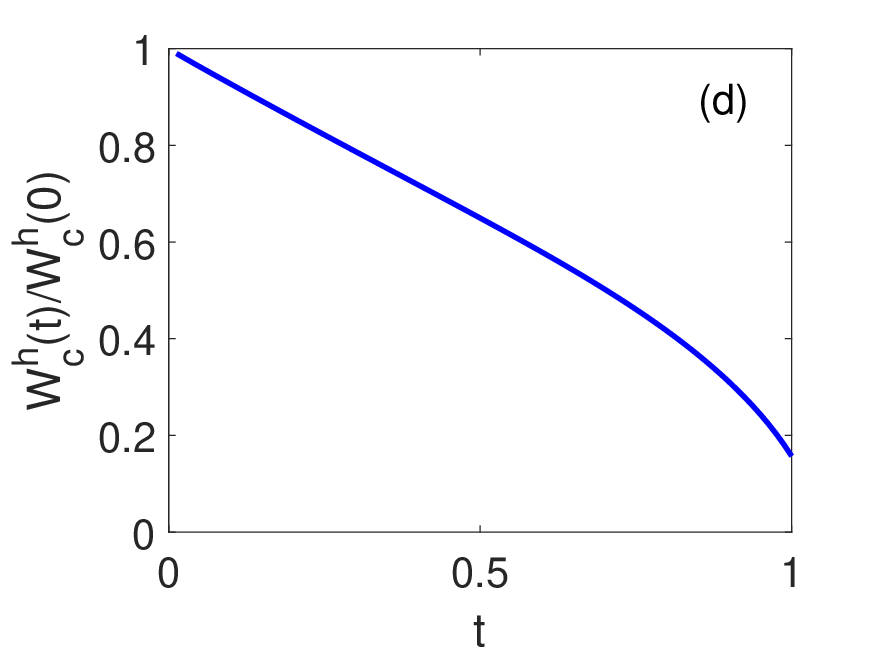}}
\subfigure[\empty]{ 
\includegraphics[trim=0 0 28 0,clip,width=0.30\textwidth,height=0.22\textwidth]
{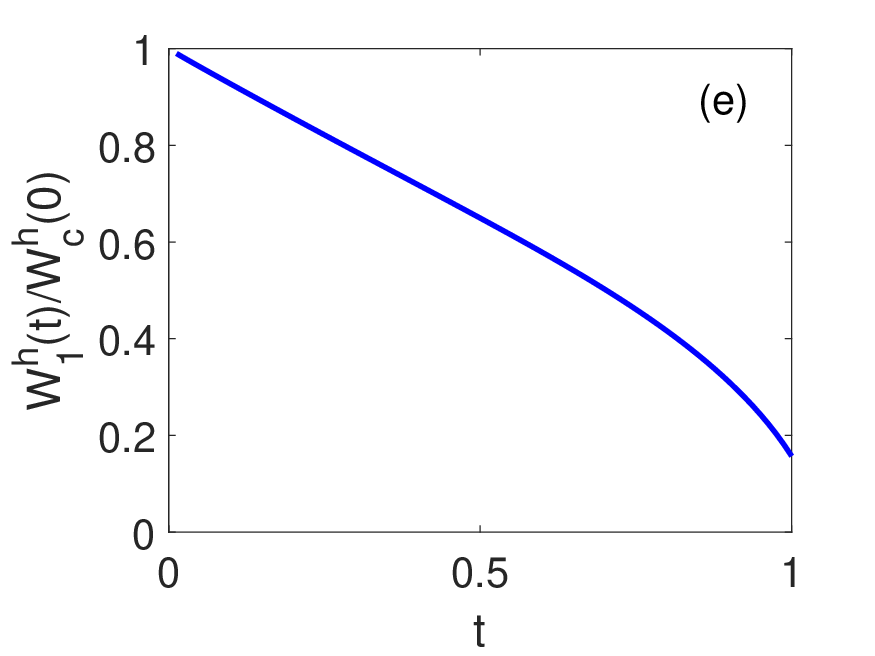}}
\subfigure[\empty]{ 
\includegraphics[trim=0 0 28 0,clip,width=0.30\textwidth,height=0.22\textwidth]
{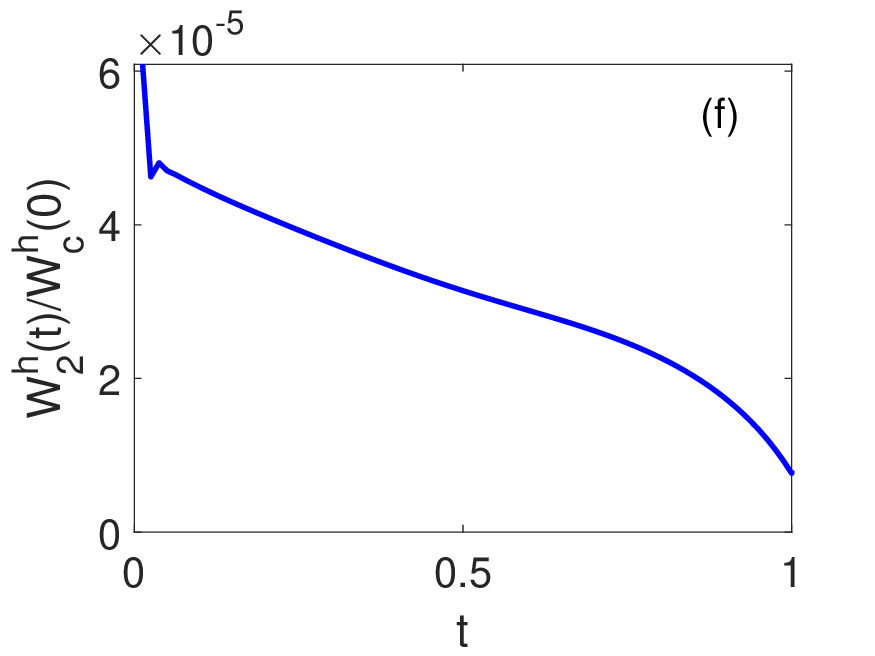}}
\vspace{-3mm}
	\caption{Numerical simulation of the anisotropic CSF for an ellipse with $\gamma(\theta)=1+0.05\cos(4\theta)$. (a) Geometric evolution. Quantitative diagnostics: (b) relative area loss $\Delta A^h(t)$; (c) mesh ratio $\Psi(t)$; (d) normalized total energy $W^h_c(t)/W_c^h(0)$; (e) normalized energy component $W_1^h(t)/W_c^h(0)$; (f) component $W_2^h(t)/W_c^h(0)$.}
\label{fig6}
\end{figure}

\begin{figure}[h!]
	\centering
    \subfigure[\empty]{ 
\includegraphics[trim=10 0 55 0,clip,width=0.30\textwidth,height=0.22\textwidth]
{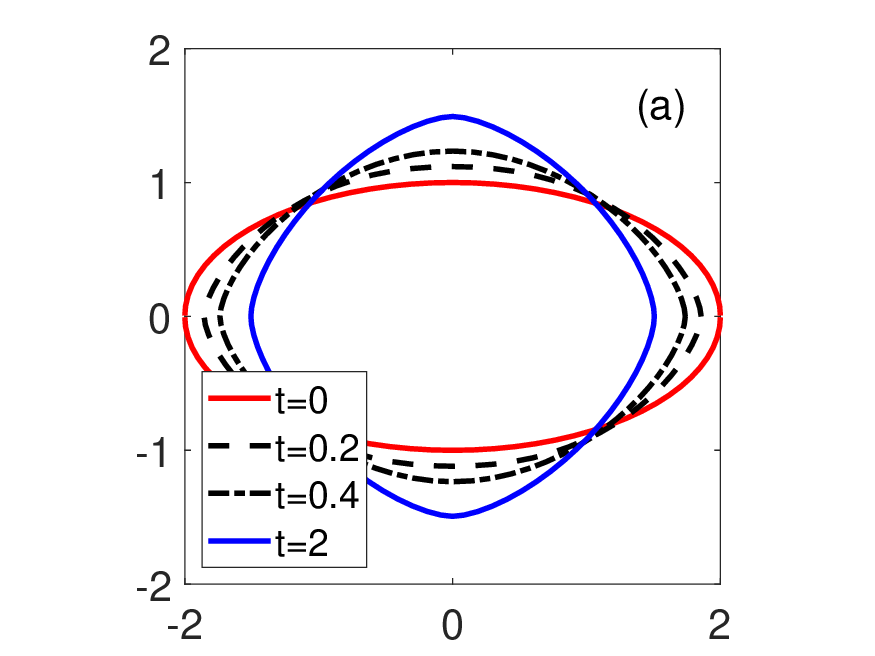}}
\subfigure[\empty]{ 
\includegraphics[trim=0 0 35 0,clip,width=0.30\textwidth,height=0.22\textwidth]
{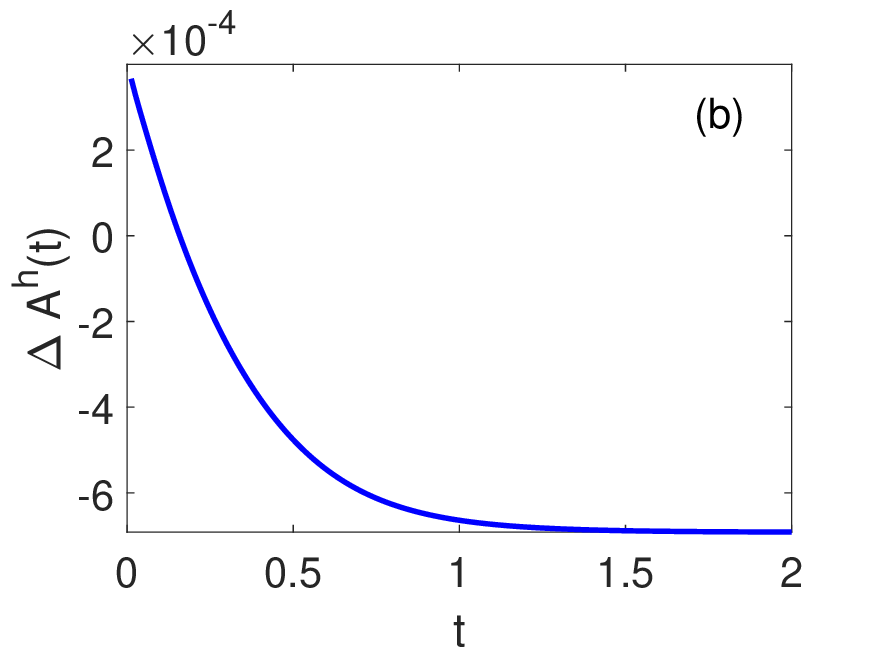}}
\subfigure[\empty]{ 
\includegraphics[trim=0 0 35 0,clip,width=0.30\textwidth,height=0.22\textwidth]
{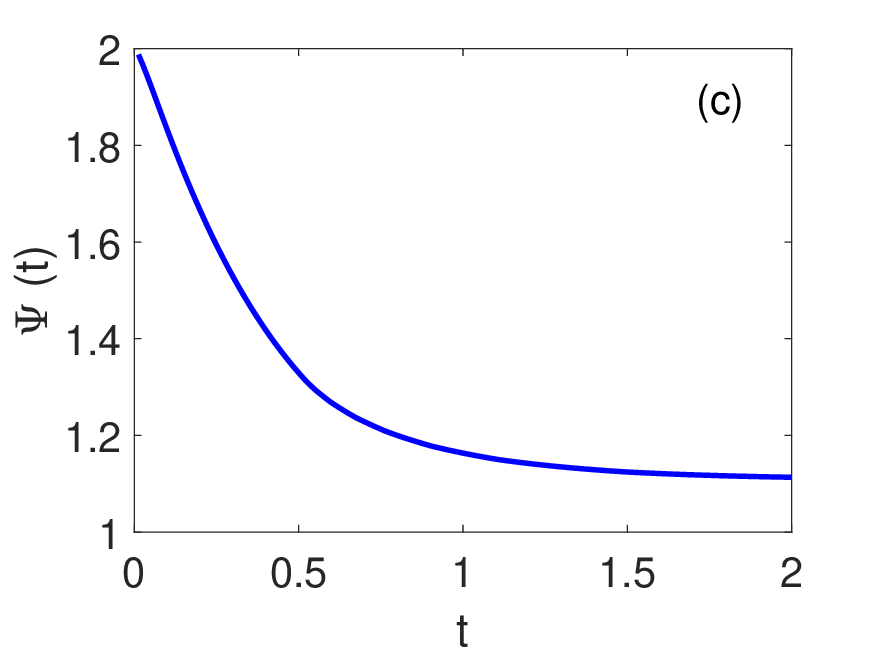}}\\
\vspace{-4mm}
\subfigure[\empty]{ 
\includegraphics[trim=0 0 35 0,clip,width=0.30\textwidth,height=0.22\textwidth]
{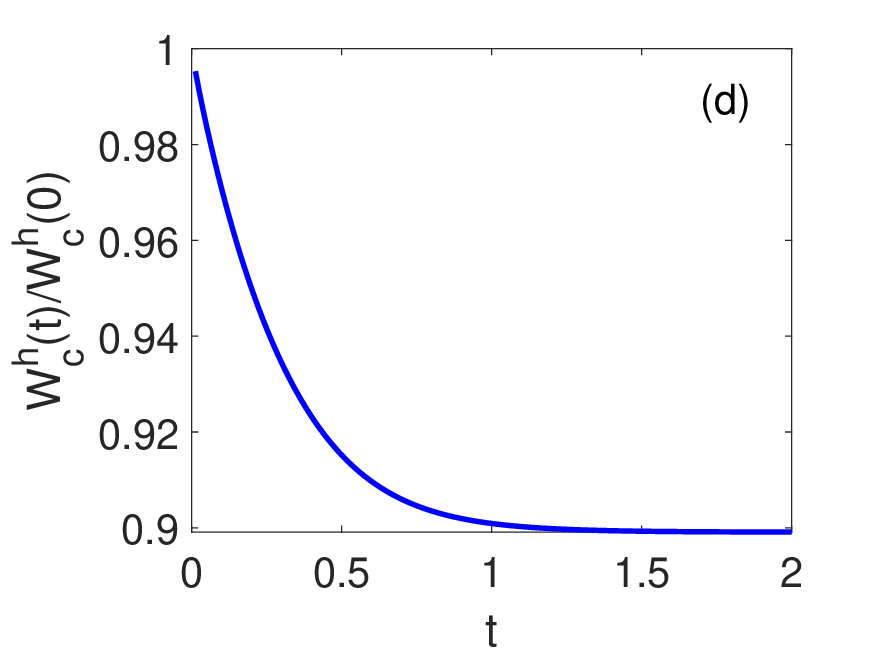}}
\subfigure[\empty]{ 
\includegraphics[trim=0 0 35 0,clip,width=0.30\textwidth,height=0.22\textwidth]
{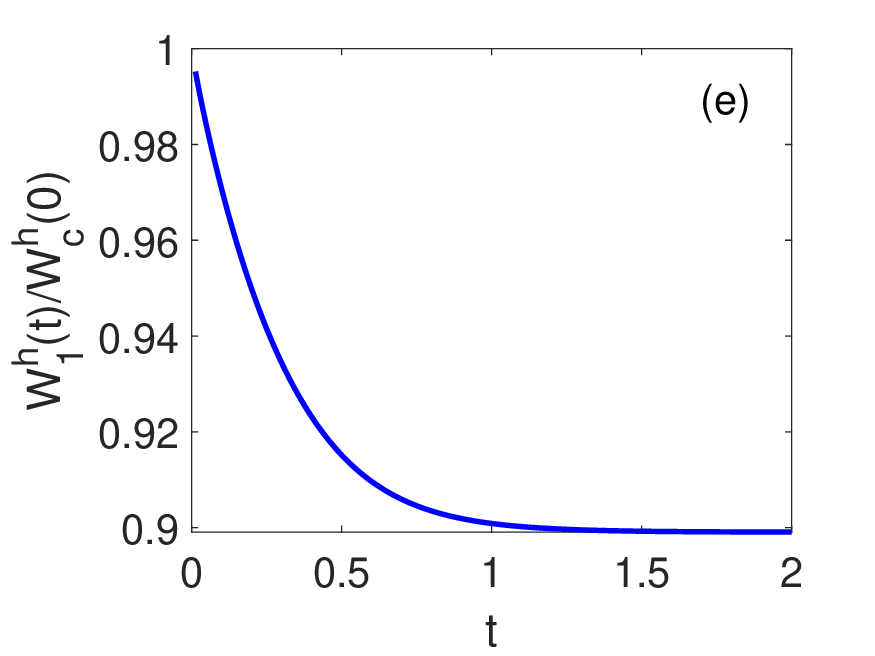}}
\subfigure[\empty]{ 
\includegraphics[trim=0 0 35 0,clip,width=0.30\textwidth,height=0.22\textwidth]
{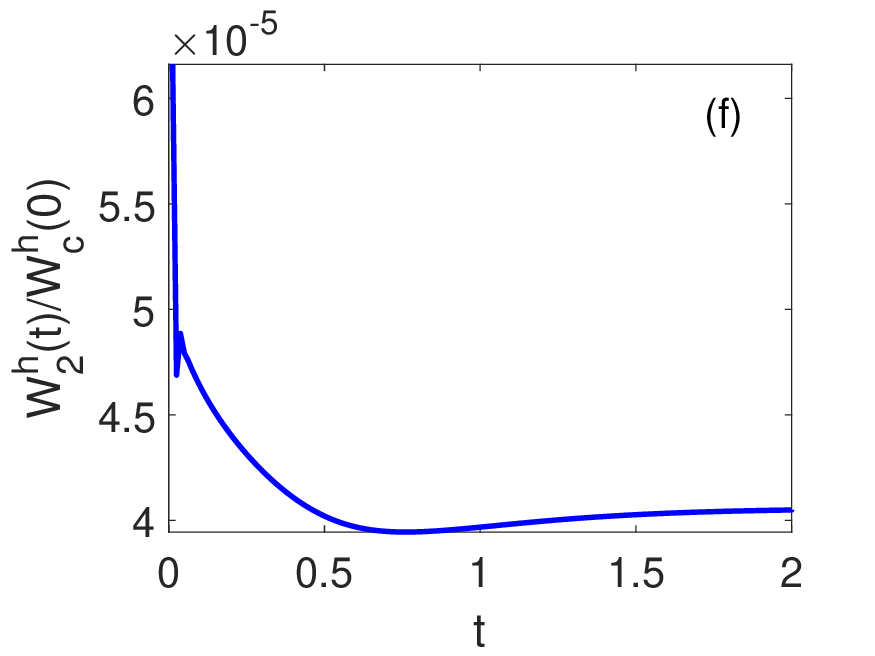}}
\vspace{-3mm}
	\caption{Numerical simulation of the anisotropic AP-CSF for an ellipse with $\gamma(\theta)=1+0.05\cos(4\theta)$. (a) Geometric evolution. Quantitative diagnostics: (b) relative area loss $\Delta A^h(t)$; (c) mesh ratio $\Psi(t)$; (d) normalized total energy $W^h_c(t)/W_c^h(0)$; (e) normalized energy component $W_1^h(t)/W_c^h(0)$; (f) component $W_2^h(t)/W_c^h(0)$.}
\label{fig7}
\end{figure}

Figs. \ref{fig6}-\ref{fig7} present the evolution under weak anisotropy, given by $\gamma(\theta)=1+0.05\cos(4\theta)$, for the CSF and AP-CSF, respectively. Each figure also displays temporal profiles of three diagnostic quantities: (i) normalized energy, (ii) relative area loss, and (iii) mesh ratio. Consistent with the isotropic case, the relative area loss for the AP-CSF stabilizes near $7\times 10^{-4}$, while the mesh ratio remains below $1.2$ throughout the evolution. The LDG method preserves the perimeter-decreasing property in both flows. Moreover, as shown in Figs. \ref{fig6}(f)-\ref{fig7}(f), the component $W_2^h/W_c^h(0)$ remains at the order of $10^{-5}$  and is negligible in the total energy; consequently $W_c^h$ and $W_1^h$ exhibit essentially identical behavior.

\begin{figure}[htbp!]
	\centering
    \subfigure[\empty]{ 
\includegraphics[trim=55 0 73 0,clip,width=0.23\textwidth,height=0.22\textwidth]
{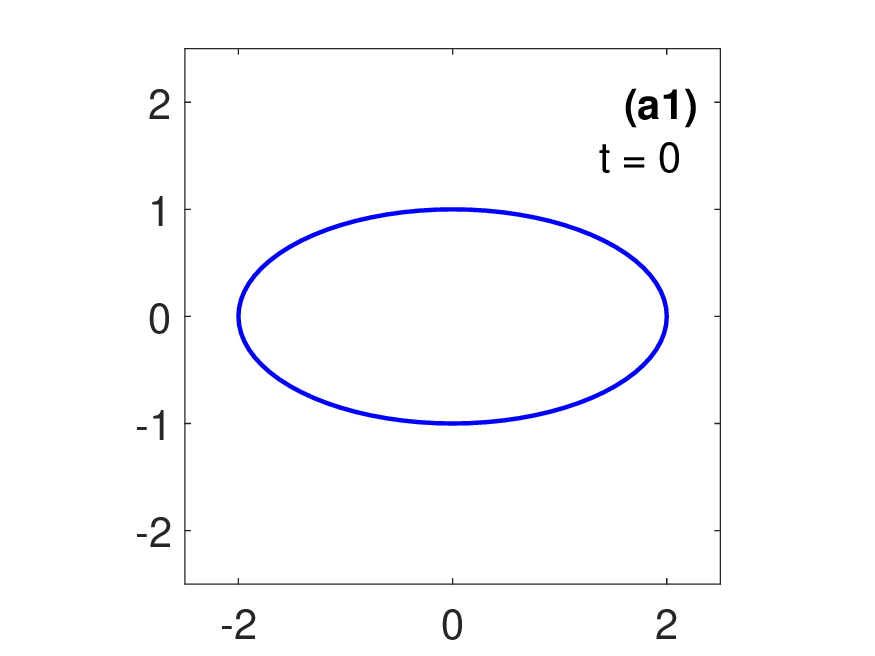}}
\subfigure[\empty]{ 
\includegraphics[trim=55 0 73 0,clip,width=0.23\textwidth,height=0.22\textwidth]
{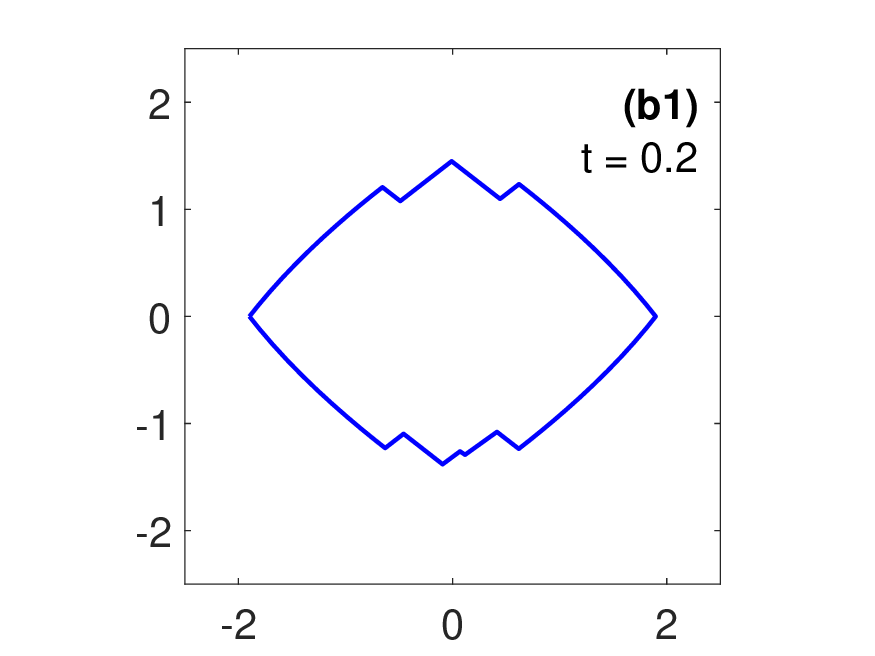}}
\subfigure[\empty]{ 
\includegraphics[trim=55 0 73 0,clip,width=0.23\textwidth,height=0.22\textwidth]
{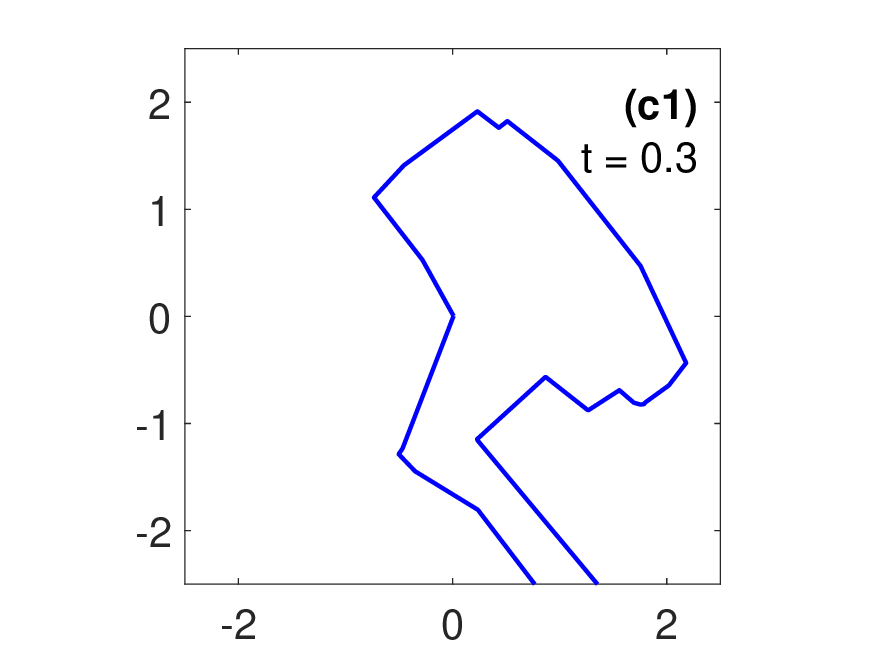}}
\subfigure[\empty]{ 
\includegraphics[trim=55 0 73 0,clip,width=0.23\textwidth,height=0.22\textwidth]
{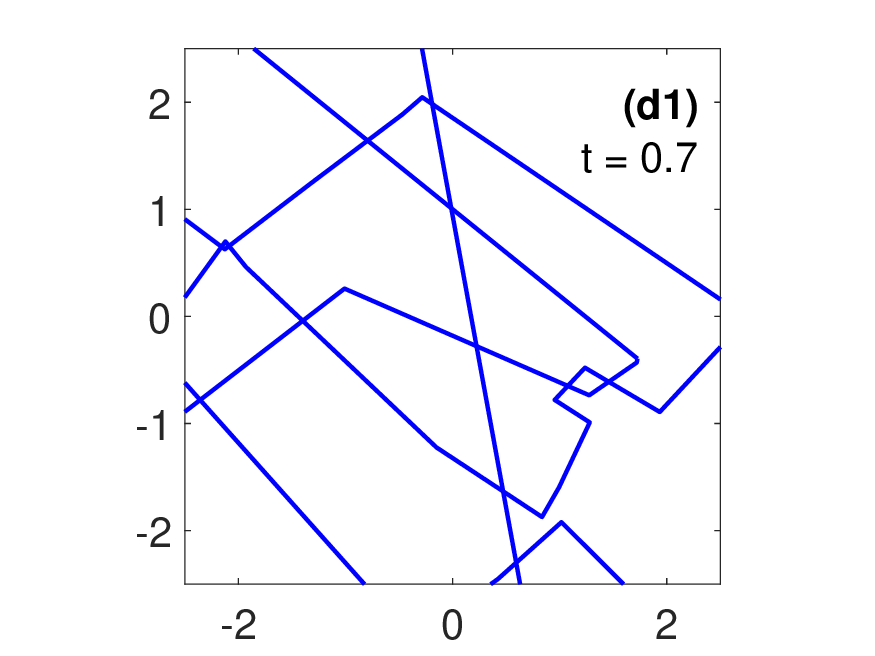}}\\\vspace{-3mm}
    \subfigure[\empty]{ 
\includegraphics[trim=55 0 73 0,clip,width=0.23\textwidth,height=0.22\textwidth]
{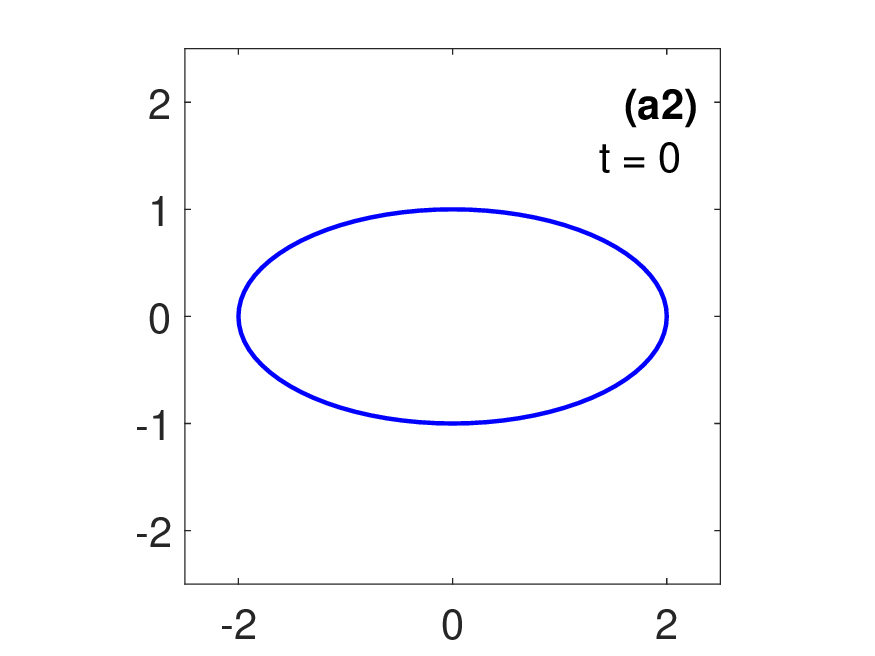}}
\subfigure[\empty]{ 
\includegraphics[trim=55 0 73 0,clip,width=0.23\textwidth,height=0.22\textwidth]
{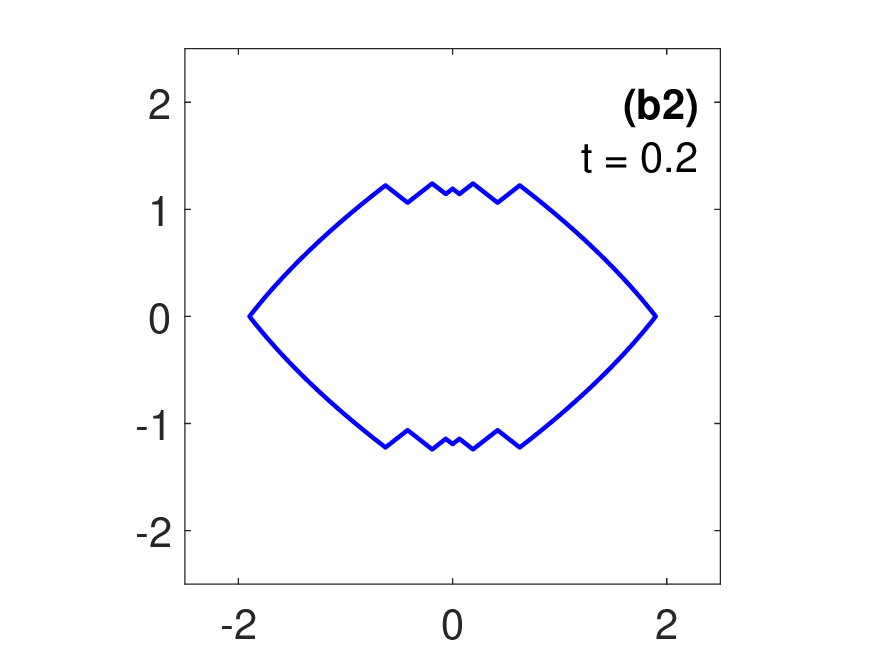}}
\subfigure[\empty]{ 
\includegraphics[trim=55 0 73 0,clip,width=0.23\textwidth,height=0.22\textwidth]
{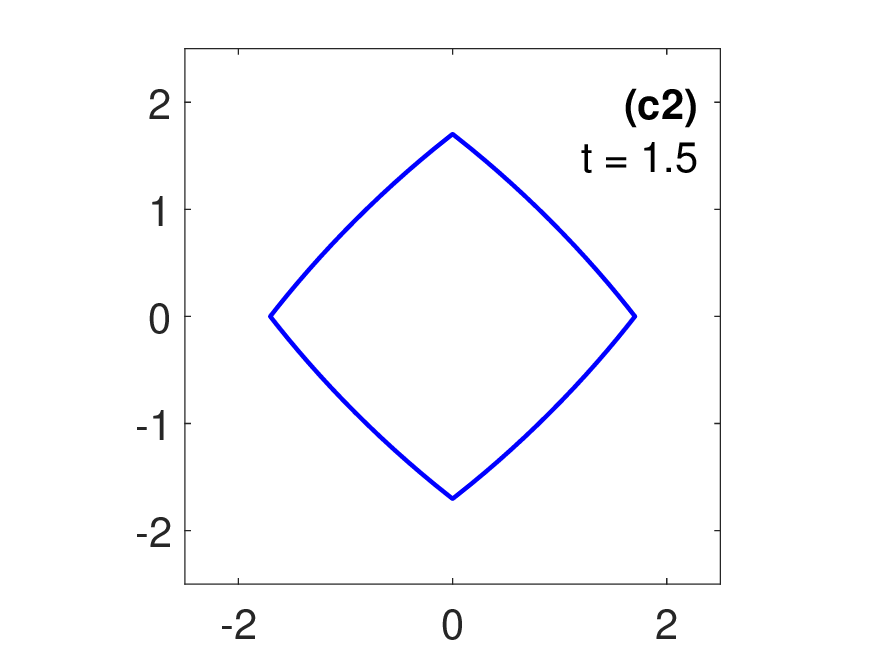}}
\subfigure[\empty]{ 
\includegraphics[trim=55 0 73 0,clip,width=0.23\textwidth,height=0.22\textwidth]
{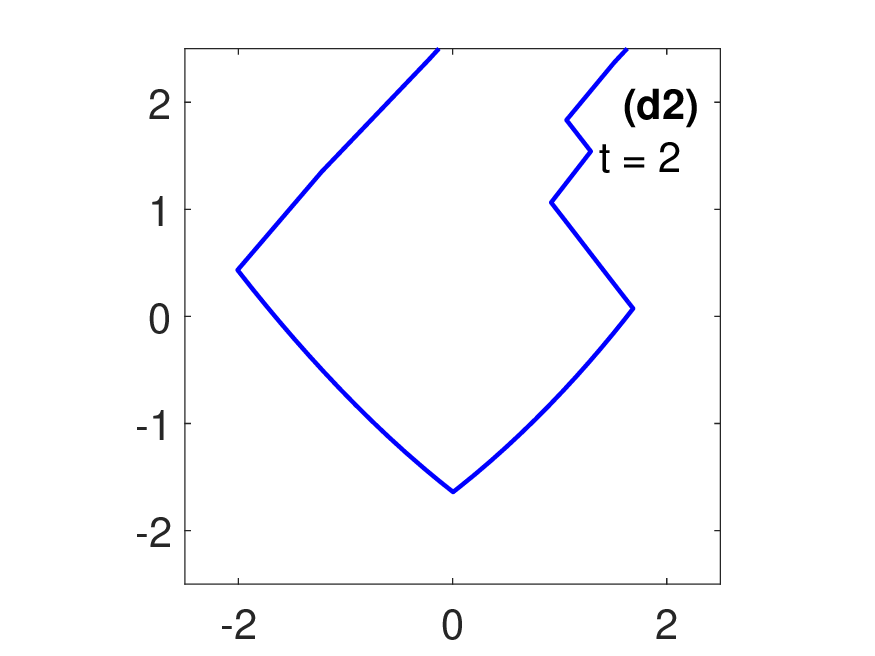}}\\\vspace{-3mm}
    \subfigure[\empty]{ 
\includegraphics[trim=55 0 73 0,clip,width=0.23\textwidth,height=0.22\textwidth]
{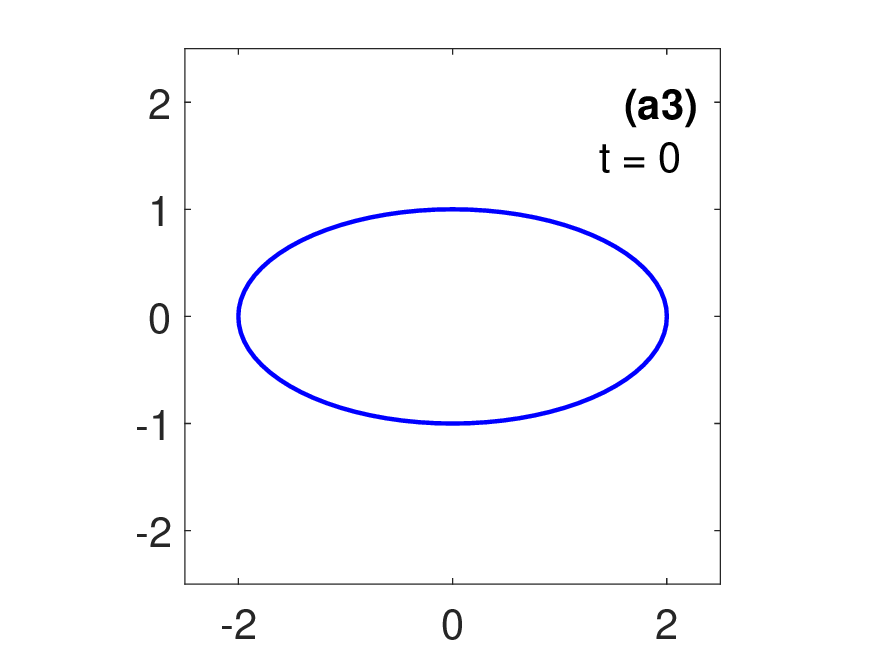}}
\subfigure[\empty]{ 
\includegraphics[trim=55 0 73 0,clip,width=0.23\textwidth,height=0.22\textwidth]
{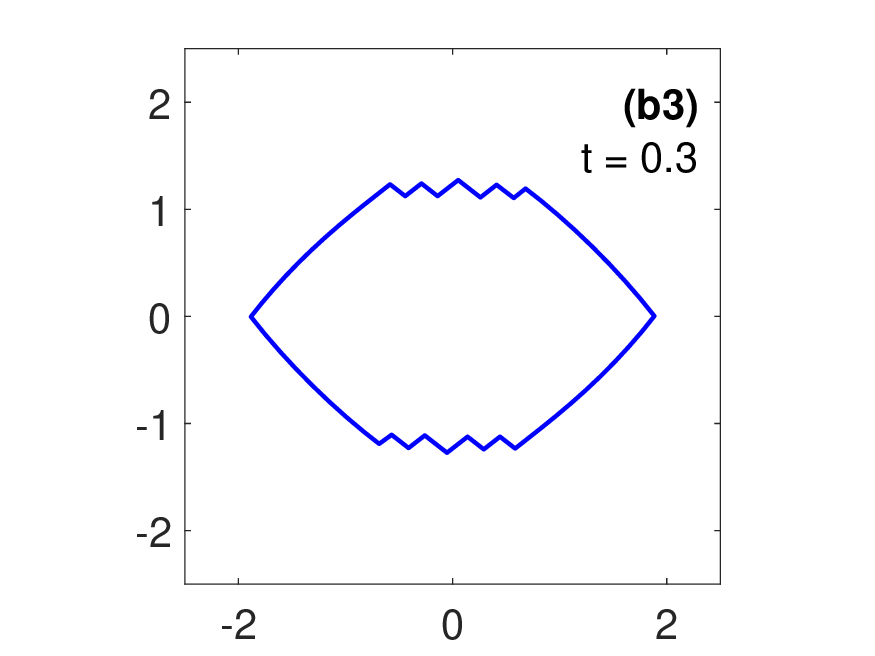}}
\subfigure[\empty]{ 
\includegraphics[trim=55 0 73 0,clip,width=0.23\textwidth,height=0.22\textwidth]
{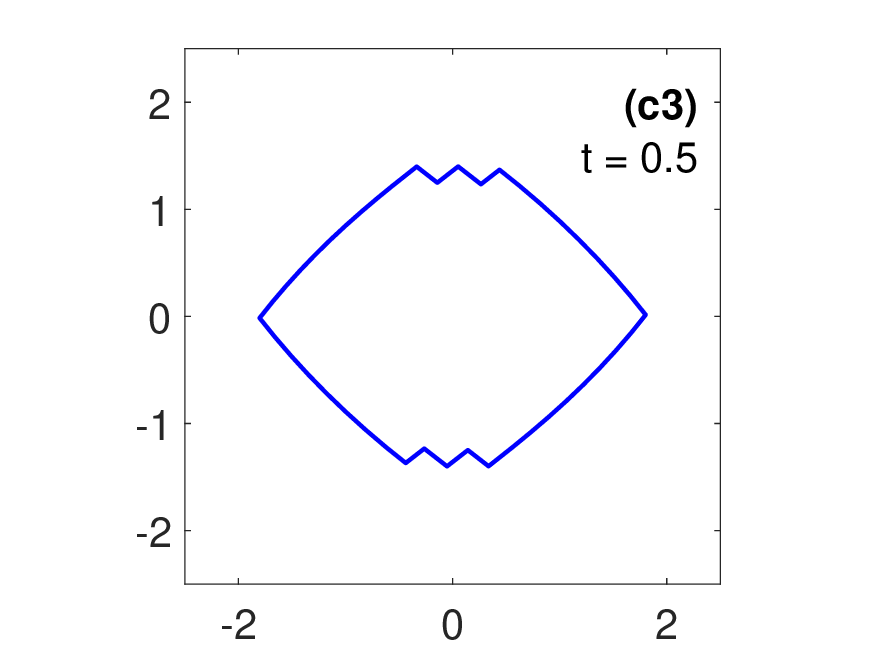}}
\subfigure[\empty]{ 
\includegraphics[trim=55 0 73 0,clip,width=0.23\textwidth,height=0.22\textwidth]
{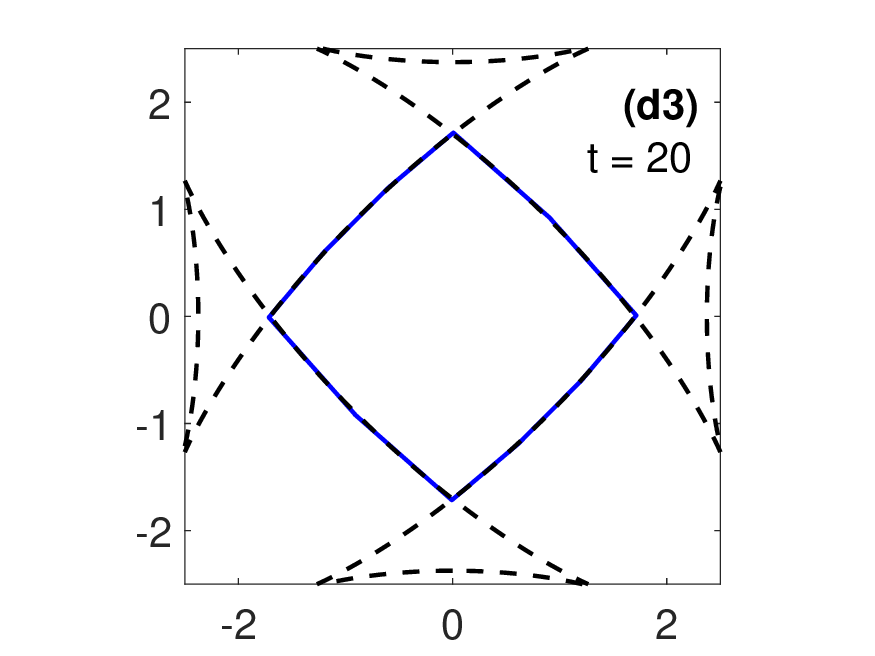}}
\vspace{-3mm}
	\caption{Numerical simulation of the anisotropic AP-CSF for an ellipse with $\gamma(\theta)=1+0.3\cos(4\theta)$: (a1)-(d1) the ES-PFEM \cite{Li2021}; (a2)-(d2) the symmetrized PFEM \cite{Bao202361}; (a3)-(d3) the proposed LDG method.}
\label{fig8}
\end{figure}

\begin{figure}[htbp!]
	\centering
    \subfigure[\empty]{ 
\includegraphics[trim=0 0 10 5,clip,width=0.32\textwidth,height=0.3\textwidth]
{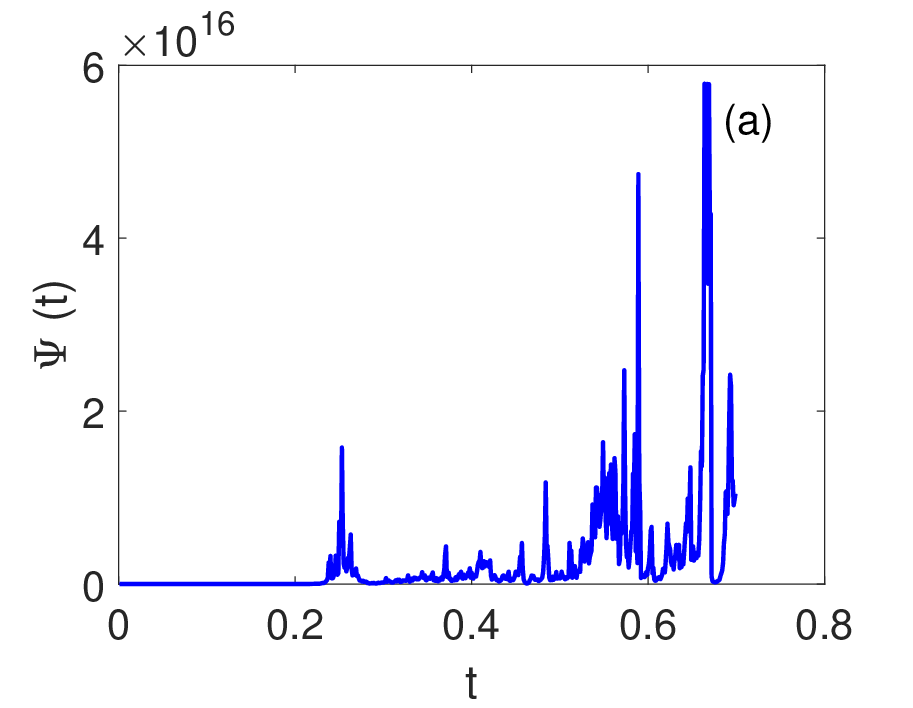}}
\subfigure[\empty]{ 
\includegraphics[trim=0 0 10 0,clip,width=0.32\textwidth,height=0.3\textwidth]
{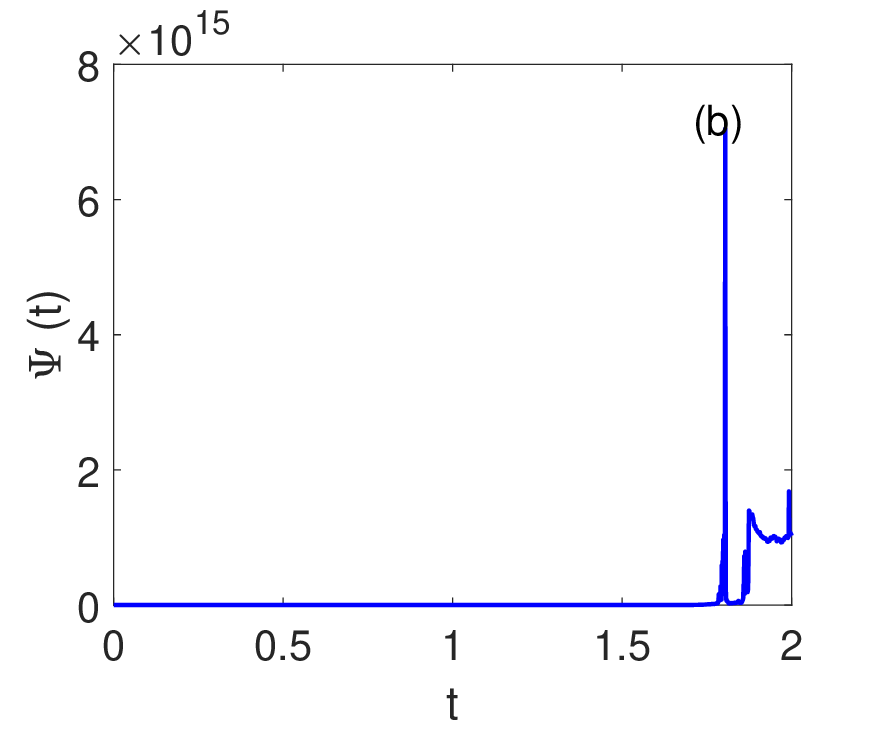}}
\subfigure[\empty]{ 
\includegraphics[trim=0 0 10 0,clip,width=0.32\textwidth,height=0.3\textwidth]
{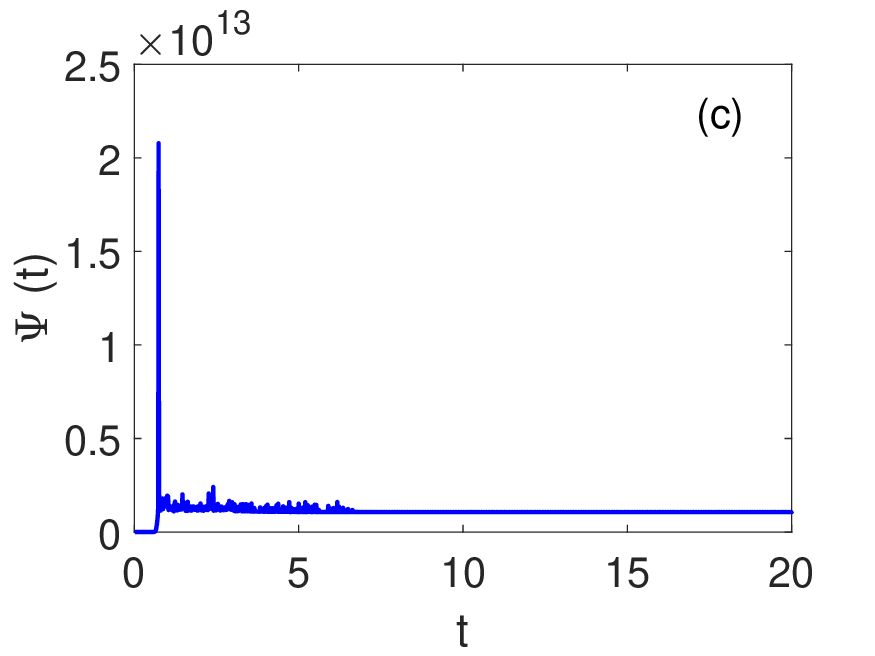}}\vspace{-3mm}
	\caption{Mesh ratio: (a) the ES-PFEM \cite{Li2021}; (b) the symmetrized PFEM \cite{Bao202361}; (c) the proposed LDG method.}
\label{fig9}
\end{figure}

Figs. \ref{fig8}-\ref{fig9} present a systematic comparison of three numerical methods under strong anisotropy, characterized by $\gamma(\theta)=1+0.3\cos(4\theta)$: the ES-PFEM \cite{Li2021}, the symmetrized PFEM \cite{Bao202361}, and the proposed LDG method with $k=1$. The numerical results indicate that ES-PFEM fails to maintain a stable evolution under such strong anisotropy (see Figs. \ref{fig8}(a1)-(d1)). The symmetrized PFEM initially appears to reach equilibrium but subsequently deteriorates as the mesh ratio grows dramatically (Figs. \ref{fig8}(a2)-(d2)). In contrast, the LDG method preserves geometric symmetry throughout the entire evolution (Figs. \ref{fig8}(a3)-(d3)), and its numerical equilibrium closely aligns with the theoretical Wulff prediction, indicated by the black dashed line in Fig. \ref{fig8}(d3). Notably, Fig. \ref{fig9}(c) demonstrates that the LDG method maintains stable evolution even when the mesh ratio is as extreme as $ 2\times 10^{13}$, highlighting that its stability does not depend on a well-distributed mesh---unlike the other two methods. These findings underscore the clear superiority of the proposed LDG scheme in handling strong anisotropy robustly and accurately.

\begin{figure}[h!]
	\centering
\subfigure[\empty]{ 
\includegraphics[trim=10 90 0 80,clip,width=1\textwidth]
{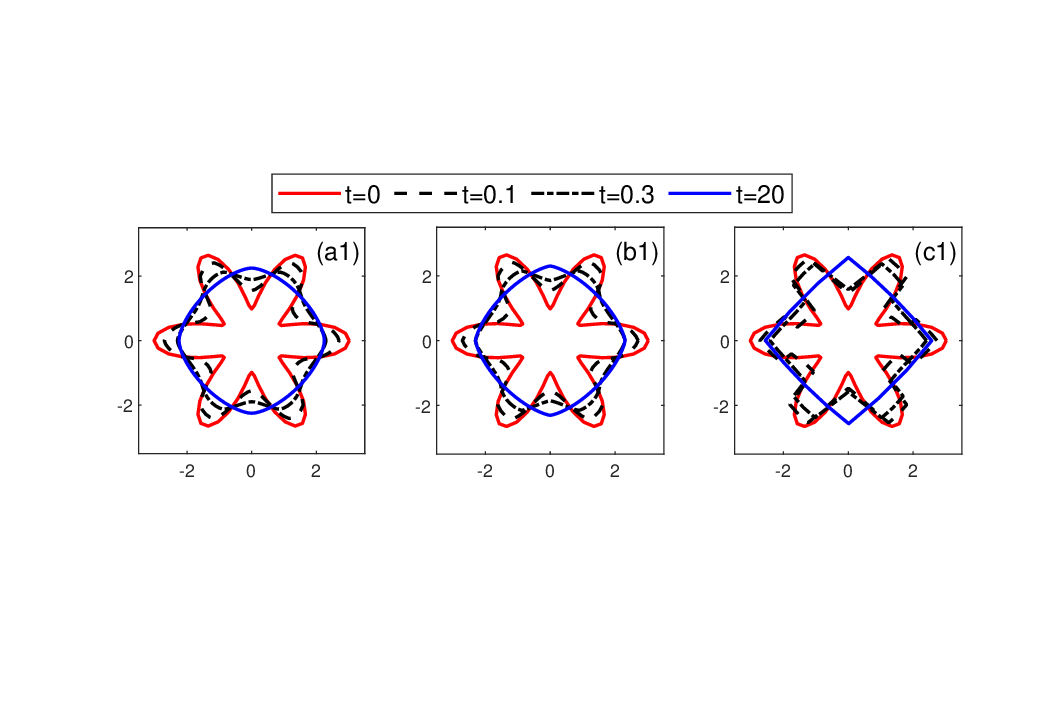}}\\\vspace{-6mm}
\subfigure[\empty]{ 
\includegraphics[trim=10 90 0 80,clip,width=1\textwidth]
{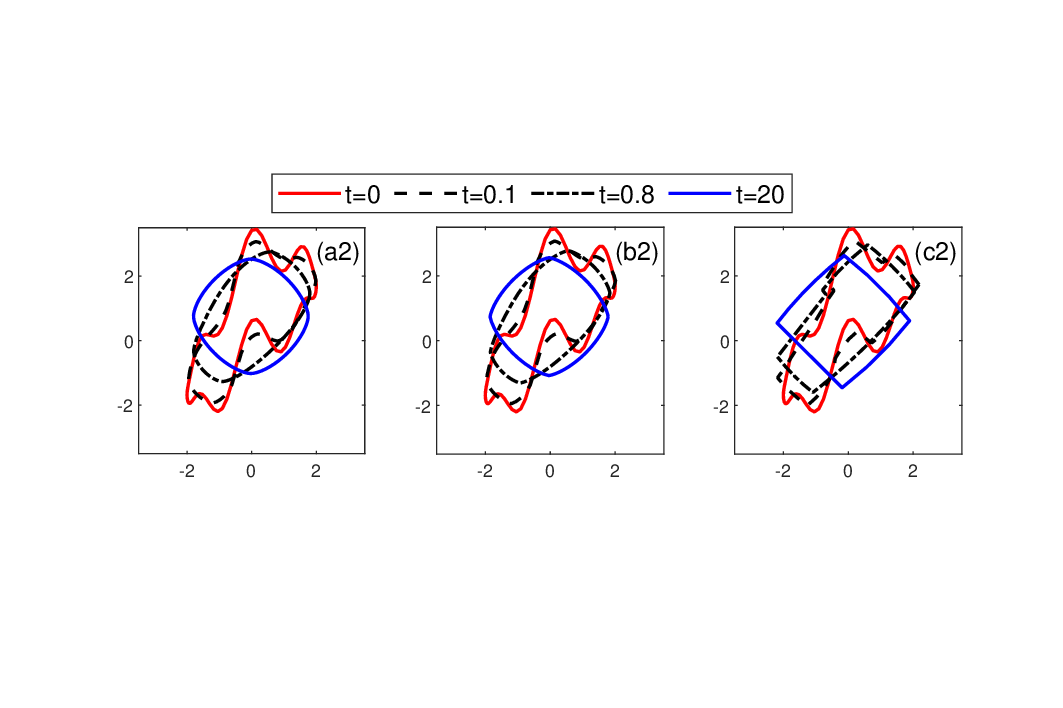}}\vspace{-7mm}
	\caption{Evolution of a flower (top) and Mikula (bottom) shapes under the anisotropic AP-CSF. Simulations correspond to  the energy density $\gamma(\theta)=1+\beta\cos(4\theta)$ with (a1)-(a2) $\beta=0.05$, (b1)-(b2) $\beta=0.067$, and (c1)-(c2) $\beta=0.3$. }
\label{fig12}
\end{figure}

Fig. \ref{fig12} further examines the evolution of the flower and Mikula curves (Curves 3 and 4) under varying anisotropy strengths. The comparison reveals that increasing anisotropy significantly alters the final morphology, deriving a transition toward rectangular-like shapes. Specifically, when $\gamma(\theta)=1+0.3\cos(4\theta)$, the equilibrium exhibits pronounced rhombic features. These observations are in excellent agreement with classical results from anisotropic curvature flow theory \cite{Bao202361,Bao202345}, thereby confirming the high accuracy and robustness of the proposed numerical scheme.

\begin{example}[Convergence tests]
We systematically evaluate the spatial convergence of the proposed LDG method for both the CSF and AP-CSF models under the surface energy density $\gamma(\theta) = 1+\beta\cos(4\theta)$. Here, $\beta=0.05$ corresponds to the weak anisotropy case, while $\beta=0.067$ and $\beta=0.07$ correspond to the strong anisotropy cases.
\end{example}

\begin{table}[htbp!]
	\centering
	\caption{Convergence of the manifold distance for the anisotropic CSF, using $P^k$ elements ($k = 1, 2, 3,4$), where the initial curve is chosen as Curve 1. Time step and final time: $\tau=5h^{k+1}$, $T=0.25$.}
\resizebox{\textwidth}{!}{%
        \begin{tabular}{llll @{\hspace{0.8em}} lll@{\hspace{0.8em}}lll@{\hspace{0.8em}}lll}
		\toprule
            &$P^1$&&&$P^2$&&&$P^3$&&&$P^4$&&\\
            \cmidrule(r){2-4} \cmidrule(r){5-7} \cmidrule(r){8-10}\cmidrule(r){11-13}
		&$N$   & error& order&$N$   & error& order&$N$   & error& order&$N$   & error& order\\
            \bottomrule
$\beta=0.05$&5 & 2.38e-01  &-      &5 & 1.01e-01  &-    &5 & 2.30e-02  &-    & 5 & 5.10e-03  &-\\
		  &10& 6.76e-02  & 1.81 & 10& 1.29e-02  &2.96 &10& 1.43e-03  &4.00 & 10& 1.64e-04  &4.95\\
            &20& 1.61e-02  &2.06  & 20& 1.45e-03  &3.15 &20& 8.39e-05  &4.09 & 15& 2.17e-05  &4.98\\
		  &40& 3.43e-03 & 2.23  & 40& 1.67e-04  &3.13 &40& 5.08e-06  &4.04 & 20& 5.16e-06  &4.99\\
	\\
$\beta=0.067$&5 & 2.39e-01  &-      &5 & 9.93e-02  &-    &5 & 2.15e-02  &-    & 5 & 9.96e-03  &-\\
		  &10& 6.24e-02  & 1.93 & 10& 1.36e-02  &3.06 &10& 1.46e-03  &3.87 & 10& 3.06e-04  &5.02\\
            &20& 1.38e-02  &2.17  & 20& 1.62e-03  &3.15 &20& 9.52e-05  &3.94 & 15& 3.94e-05  &5.06\\
		  &40& 3.45e-03 & 2.00  & 40& 1.93e-04  &3.16 &40& 5.93e-06  &4.00 & 20& 9.10e-06  &5.09\\
 \\
$\beta=0.07$&5 & 4.40e-01  &-      &5 & 8.32e-02  &-    &5 & 2.53e-02  &-    & 5 & 1.07e-02  &-\\
		  &10& 1.27e-01  & 1.78 & 10& 1.47e-03  &2.49 & 10 & 2.31e-03  &3.45 &10 & 6.79e-04  & 3.99\\
            &20& 2.35e-02  &2.44 &20& 1.86e-03  &2.98 &20& 3.99e-04  &2.53 &15& 3.26e-04  &1.80\\
		  &40&6.08e-03  & 1.94  &40& 1.81e-04 &3.35  &40&3.84e-04  & 0.05 &20& 3.19e-04 & 0.07 \\
		\bottomrule
	\end{tabular}}
    \label{tab3}
\end{table}

\begin{table}[htbp!]
	\centering
	\caption{Convergence of the manifold distance for the anisotropic AP-CSF, using $P^k$ elements in space, where the initial curve is chosen as Curve 2. Time step and final time: $\tau=5h^{k+1}$, $T=0.25$.}
\resizebox{\textwidth}{!}{%
        \begin{tabular}{llll @{\hspace{0.8em}} lll@{\hspace{0.8em}}lll@{\hspace{0.8em}}lll}
		\toprule
            &$P^1$&&&$P^2$&&&$P^3$&&&$P^4$&&\\
            \cmidrule(r){2-4} \cmidrule(r){5-7} \cmidrule(r){8-10}\cmidrule(r){11-13}
		&$N$   & error& order&$N$   & error& order&$N$   & error& order&$N$   & error& order\\
            \bottomrule
$\beta=0.05$&5 & 6.67e-01  &-      &5 & 7.75e-02  &-    &5 & 1.83e-02  &-    & 5 & 1.03e-02  &-\\
		  &10& 1.59e-01  & 2.06 & 10& 8.22e-03  &3.23 &10& 1.10e-03  &4.05 & 10& 3.03e-04  &5.08\\
            &20& 3.93e-02  &2.02  & 20& 9.26e-04  &3.15 &20& 6.86e-05  &4.00 & 15& 3.87e-05  &5.07\\
		  &40& 9.76e-03 & 2.01  & 40& 9.76e-05  &3.24 &40& 4.02e-06  &4.09 & 20& 9.11e-06  &5.02\\
	\\
$\beta=0.067$&5 & 7.29e-01  &-      &5 & 1.05e-01  &-    &5 & 9.57e-02  &-    & 5 & 6.56e-02  &-\\
		  &10& 1.59e-01  & 2.19 & 10& 1.18e-02  &3.16 &10& 5.59e-03  &4.09 & 10& 2.03e-03  &5.00\\
            &20& 4.02e-02  &1.98  & 20& 1.43e-03  &3.04 &20& 3.39e-04  &4.04 & 15& 2.67e-04  &5.01\\
		  &40& 9.47e-03 & 2.08  & 40& 1.80e-04  &2.99 &40& 2.06e-05  &4.04 & 20& 6.27e-05  &5.03\\
          \\
$\beta=0.07$&5 & 7.44e-01  &-      &5 & 8.42e-02  &-    &5 & 9.58e-02  &-    & 5 & 6.59e-02  &-\\
		  &10& 1.50e-01  & 2.30 & 10& 7.60e-03  &3.46 & 10 & 4.95e-03  &4.27 &10 & 2.21e-03  & 4.89\\
            &20& 4.02e-02  &1.89 &20& 4.22e-03  &0.84 &20& 4.16e-03  &0.24 &15& 2.08e-03  &0.15\\
		  &40&9.47e-03  & 2.08  &40& 2.25e-03 &0.91  &40&3.87e-03  & 0.10 &20& 2.05e-03 & 0.05 \\
		\bottomrule
	\end{tabular}}
    \label{tab4}
\end{table}

Tables \ref{tab3}-\ref{tab4} present the manifold distance and corresponding convergence rates for the aniostropic CSF and AP-CSF at time $T=0.25$. The numerical results demonstrate that, for both weak anisotropy ($\beta=0.05$) and strong anisotropy ($\beta=0.067$), the LDG scheme with $P^k$ elements achieves the optimal spatial convergence rate of $O(h^{k+1})$, confirming the method's consistent convergence behavior and robustness across different anisotropic regimes. However, for even stronger anisotropy ($\beta=0.07$), the optimal convergence order is not observed at intermediate time levels. This is likely due to the fact that, as anisotropy intensifies, all mesh points eventually cluster at the four corners of the rhombus-like equilibrium shape.
We also compare the numerical equilibrium shapes with the theoretical Wulff shape. As shown in Fig. \ref{fig13}, all four numerical equilibrium shapes agree with the theoretical prediction, with higher-order $P^k$ elements providing increasingly accurate approximations.

\begin{figure}[htbp!]
    \centering
    \begin{tikzpicture}
        \node (a) at (-2.5,0) {
            \subfigure[\empty]{
                \includegraphics[trim=60 0 60 5,clip,width=0.29\textwidth]
                {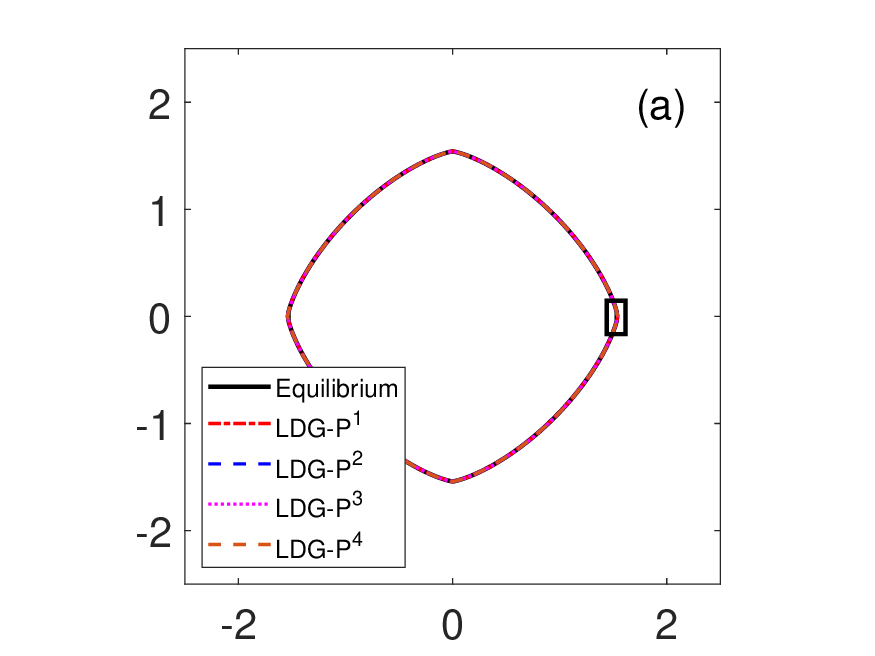}}
        };
        \node (b) at (1.7,0) {
            \subfigure[\empty]{
                \includegraphics[trim=60 0 60 5,clip,width=0.29\textwidth]
                {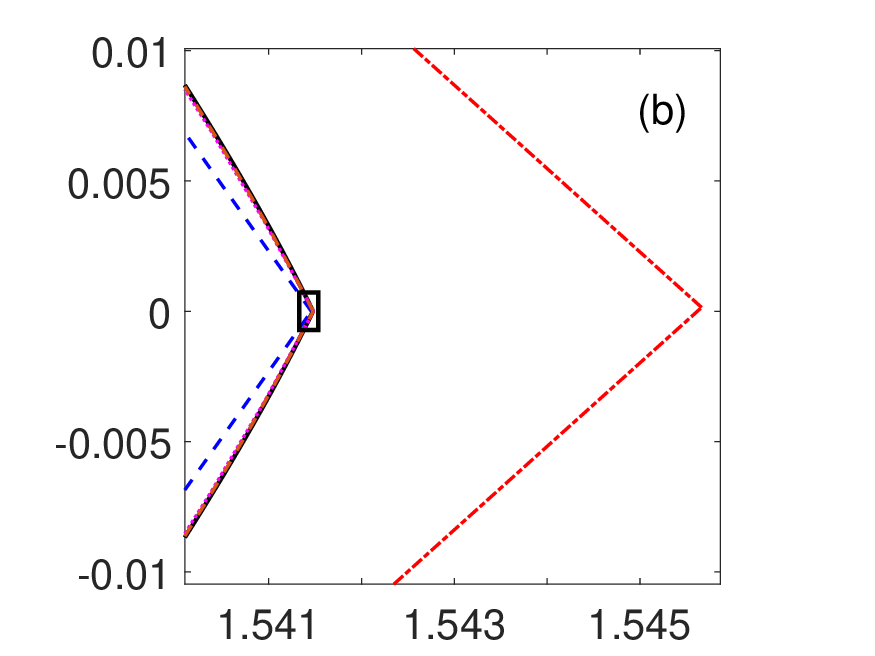}}
        };
        \node (c) at (5.8,0) {
            \subfigure[\empty]{
                \includegraphics[trim=60 0 60 5,clip,width=0.29\textwidth]
                {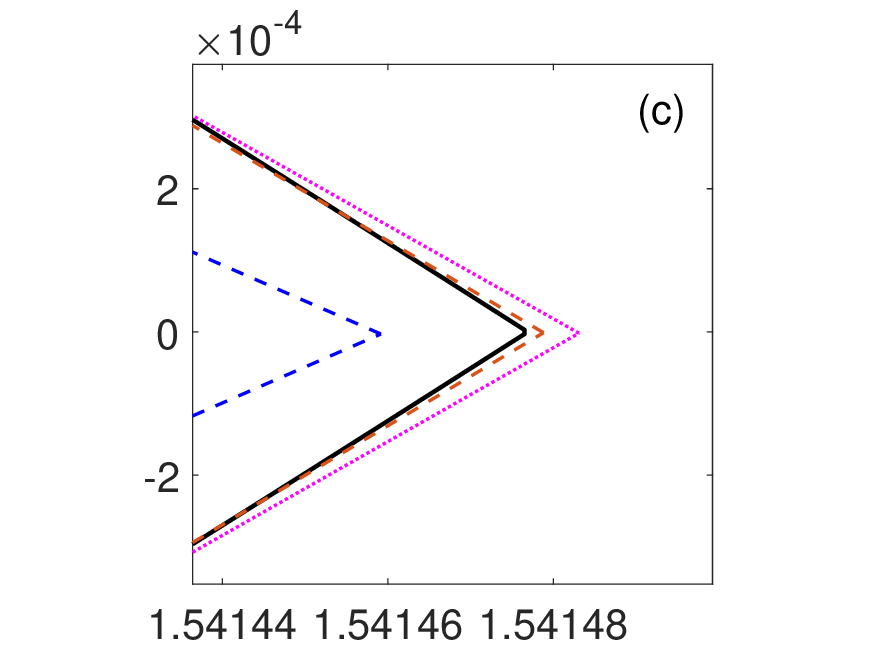}}
        };

        \node (arrow1) at (1.3,0) {
            \begin{tikzpicture}[baseline=-0.7ex]
                \coordinate (start) at ($(a.west)+(0,0.4)$);
                \coordinate (end) at ($(b.east)+(-6.5,0.4)$);
                \draw[->, >=stealth', line width=1.2pt, black] (start) -- (end);
                \node[above, font=\footnotesize] at (0.5,0.1) {};
            \end{tikzpicture}
        };

        \node (arrow2) at (4.3,0) {
            \begin{tikzpicture}[baseline=-0.7ex]
                \coordinate (start) at ($(a.west)+(0,0.4)$);
                \coordinate (end) at ($(b.east)+(-5.8,0.4)$);
                \draw[->, >=stealth', line width=1.2pt, black] (start) -- (end);
                \node[above, font=\footnotesize] at (0.5,0.1) {};
            \end{tikzpicture}
        };
    \end{tikzpicture}
    \vspace{-6mm}
    \caption{Comparison between the numerically computed equilibrium shapes obtained using $P^k$ LDG elements and the theoretical equilibrium (i.e., the Wulff shape, shown as a solid black line). Panels (b)-(c) present zoomed-in views, with the strength of anisotropy set to $\beta=0.07$.}
    \label{fig13}
\end{figure}

\section{Conclusions}
\label{sec:conclusion}

We propose a family of high-order local discontinuous Galerkin (LDG) methods based on a parametric representation for the anisotropic curve-shortening flow and anisotropic area-preserving curve-shortening flow. By carefully designing consistent numerical fluxes tailored to the parametric formulation, we prove that the semi-discrete scheme satisfies unconditional energy dissipation. Moreover, we establish the well-posedness of the fully discrete scheme under mild conditions. Extensive numerical experiments demonstrate that the proposed LDG scheme attains optimal spatial convergence of order $\mathcal{O}(h^{k+1})$ when using $P^k$ polynomial approximations, for both isotropic and weakly anisotropic cases. Importantly, the scheme preserves essential geometric structures throughout the evolution and remains stable even in the presence of strong anisotropy, which often leads to extremely large mesh ratio. In contrast to the commonly used parametric finite element methods, our LDG approach delivers high-order spatial accuracy without requiring a well-distributed mesh and exhibits markedly enhanced numerical robustness and broader applicability---particularly in challenging strongly anisotropic regimes.

The effectiveness of the LDG method in handling strong anisotropy can be attributed to a fundamental change in the nature of the governing geometric partial differential equations (PDEs): as anisotropy intensifies, these equations transition from parabolic to hyperbolic type. Discontinuous Galerkin methods are well known for their ability to robustly handle hyperbolic problems, making them especially well-suited for this setting. Nevertheless, the theoretical analysis of hyperbolic geometric flows remains largely unexplored in the literature~\cite{Peng1998}.

Looking ahead, we aim to further explore the high performance of the proposed LDG schemes and extend the variational formulation to other geometric flows, particularly for mean curvature flow and surface diffusion in $\mathbb{R}^3$.

	\begin{acknowledgements}
The work of Wei Jiang was partially supported by the National Natural Science Foundation of China (No. 12271414), and the work of Chunmei Su was supported by the National Key R\&D Program of China (2023YFA1008902), and the National Natural Science Foundation of China (No. 12522118).
\end{acknowledgements}

\noindent\textbf{Data Availability} Data sharing not applicable to this article as no datasets were generated or analyzed during the current study.

	\section*{Declarations}
	
	\textbf{Conflict of interest} The author declares no conflict of interest.



\end{document}